\theoremstyle{plain}
\newtheorem{thm}{Theorem}[section]
\newtheorem{prp}[thm]{Proposition}
\newtheorem{cor}[thm]{Corollary}
\newtheorem{lem}[thm]{Lemma}
\newtheorem{defin}[thm]{Definition}
\newcommand{\C}{\mathbb{C}}
\newcommand{\R}{\mathbb{R}}
\newcommand{\N}{\mathbb{N}}
\newcommand{\Q}{\mathbb{Q}}
\newcommand{\dist}{\,\mathrm{dist}}
\newcommand{\U}{\mathcal{U}}
\def\be{\begin{equation}}
\def\ee{\end{equation}}
\def\RR{\mathbb{R}}
\def\CC{\mathbb{C}}
\newcommand{\sq}{\mathrel{\square}}
\newcommand{\Ran}{\ensuremath{\mathbb{R}_{\rm an}}}
\newcommand{\Ranp}{\ensuremath{\mathbb{R}_{\rm an}^{\rm pow}}}
\def\cL{{\mathcal L}}
\newcommand{\cF}{\ensuremath{\mathcal{F}}}
\def\cX{{\mathcal X}}
\def\cU{{\mathcal U}}
\def\cS{{\mathcal S}}
\def\cG{{\mathcal G}}
\begin{document}

\title[Doubling coverings]{Doubling coverings via resolution of singularities and preparation}

\author{Raf Cluckers}
\address{Universit\'e de Lille, Laboratoire Painlev\'e, CNRS - UMR 8524, Cit\'e Scientifique, 59655
Villeneuve d'Ascq Cedex, France, and,
KU Leuven, Department of Mathematics,
Celestijnenlaan 200B, B-3001 Leu\-ven, Bel\-gium}
\email{raf.cluckers@univ-lille.fr}
\urladdr{http://rcluckers.perso.math.cnrs.fr/}

\author{Omer Friedland}
\address{Institut de Math\'ematiques de Jussieu - Paris Rive Gauche, Sorbonne Universit\'e - Campus Pierre et Marie Curie, 4 place Jussieu, 75252 Paris, France.}
\email{omer.friedland@imj-prg.fr}

\author{Yosef Yomdin}
\address{Department of Mathematics, The Weizmann Institute of Science, Rehovot 76100, Israel.}
\email{yosef.yomdin@weizmann.ac.il}

\thanks{R.C. was partially supported by the European Research Council under the European Community's Seventh Framework Programme (FP7/2007-2013) with ERC Grant Agreement nr. 615722 MOTMELSUM, by the Labex CEMPI (ANR-11-LABX-0007-01), and by KU Leuven IF C14/17/083. \ Y.Y. was partially supported by the Minerva foundation.}

\begin{abstract}
In this paper we provide asymptotic upper bounds on the complexity in two (closely related) situations. We confirm {\it for the total doubling coverings and not only for the chains} the expected bounds of the form
$$
\kappa(\U) \le K_1(\log ({1}/{\delta}))^{K_2} .
$$
This is done in a rather general setting, i.e. for the $\delta$-complement of a polynomial zero-level hypersurface $Y_0$ and for the regular level hypersurfaces $Y_c$ themselves with no assumptions {\it on the singularities} of $P$. The coefficient $K_2$ is the ambient dimension $n$ in the first case and $n-1$ in the second case. However, the question of a uniform behavior of the coefficient $K_1$ remains open.
As a second theme, we confirm {\it in arbitrary dimension the upper bound for the number of a-charts} covering a real semi-algebraic set $X$ of dimension $m$ away from the $\delta$-neighborhood of a lower dimensional set $S$, with bound of the form
$$
\kappa(\delta) \le C (\log ({1}/{\delta}))^{m}
$$
holding uniformly in the complexity of $X$. We also show an analogue for level sets with parameter away from the $\delta$-neighborhood of a low dimensional set.  More generally, the bounds are obtained also for real subanalytic and real power-subanalytic sets.  
\end{abstract}
 
\maketitle

\section{Introduction} \label{sec:intro}

Let us recall the definition of a doubling covering, as given in \cite{FY17,FY}. Let $Y$ be a complex $n$-dimensional manifold and let $G\subset Y$ be a relatively compact domain in $Y$. Let $B_1^n$ be the unit ball in $\C^n$. For $\gamma >1$, a $\gamma$-doubling covering $\U$ of $G$ in $Y$ is a finite collection of analytic univalent functions $\psi_j:B_1^n \to Y$ satisfying the following conditions:

\noindent 1. The images (aka charts) $U_j = \psi_j(B_1^n)$ cover the closure $\bar G$ of $G$.

\noindent 2. Each $\psi_j$ is extendible to a mapping $\tilde\psi_j:B_\gamma^n \to Y$, which is univalent in a neighborhood of $B_\gamma^n$, where $B_\gamma^n \subset\C^n$ is the $\gamma$-times larger concentric ball of $B_1^n$.


For $\gamma = 2$ we may omit $\gamma$ in notations, and call a covering just a doubling one (sometimes using this short name also for $\gamma$-doubling coverings with $\gamma \ne 2$). Recall also that a doubling chain $Ch$ joining two points $v_1,v_2\in Y$ is a series of doubling charts $\psi_j$, $j = 1,\dots,l$, so that their images $U_j = \psi_j(B_1^n)$ satisfy $U_j\cap U_{j+1}\ne \emptyset$, $j = 1,\dots, l-1$, and $v_1\in U_1, v_2\in U_l$. We denote by $l(Ch)$ the length of a chain $Ch$, that is, the number of its elements.

Doubling coverings provide a conformally invariant version of Whitney's ball coverings of a domain $W\subset \R^n$, introduced in \cite{Whi}. These coverings consist of balls $B_j$ so that larger concentric balls $\gamma B_j$ are still in $W$. In our definition we replace $W$ by a complex manifold $Y$, while the balls $B_j$ are replaced by the charts $U_j$. In \cite{FY17} we prove, in a rather general form, that the doubling coverings (more accurately, the chains of doubling charts) on $Y$ provide an upper bound to the Kobayashi metric and an upper bound to the ``doubling constants'' on this manifold (see also \cite{FY}). Thus, these facts suggest possible connections with complex hyperbolic geometry. The results on quasi-hyperbolic metrics, on one side and on the complexity of Whitney's ball coverings, on the other, obtained in \cite{MV87} and in other related publications, look very relevant (see also the survey \cite[Chapter 6]{BB12} for extensions and developments of Whitney's coverings in other directions).

In view of these connections, one can hope that doubling coverings on $Y$ provide a common ground for a better understanding of the above mentioned structures. Consequently, one of the most important problems related to doubling coverings $\U$ of $G$ in $Y$ is an explicit bound on their ``complexity'', $\kappa(\U)$, which is the number of the doubling charts in $\U$.

Let us stress that the mere existence of a finite doubling covering $\U$ for any regular complex manifold $Y$ and any compact $G\subset Y$ is immediate: we just use the coordinate charts on $Y$. Moreover, for singular $Y$ (a situation not addressed in this paper) this fact remains basically true. Indeed, in situations where the resolution of singularities works (algebraic, analytic, sub-analytic and some o-minimal settings), we just double-cover a ``non-singular model'' of $G$ and $Y$, and compose the charts with the resolution mapping $\sigma$. However, the complexity may blow up in families. ``Uncontrolled'' complexity growth may present a major problem in applications, while the power-logarithmic bounds obtained below promise to work.

Now, we can explain the nature of the difficulties we settle in this paper. Let us start with Whitney's ball coverings. In this case there are rather accurate bounds on the complexity of such coverings. In particular, in \cite{MV87} some bounds on the complexity of the ball coverings of the complements of closed sets are given, in terms of the Minkowski dimension of these sets. For a set $A$ of dimension $l$ a (rather accurate) bound on Whitney's ball covering $\U$ of $B_1^n \setminus A_\delta$ is of the form
\be \label{eq.Vuorinen}
\kappa(\U) \le K(\frac{1}{\delta})^l\log ({1}/{\delta}).
\ee

Compare also with \cite{FY17} where a similar bound for Whitney's ball covering of the punctured disk was given (uniform in the geometry of the deleted points). Easy examples (for instance, $A$ being a hyperplane) show that these bounds are sharp and cannot be improved {\it for Whitney's balls}. The factor $({1}/{\delta})^l$ in the bound of \eqref{eq.Vuorinen} is too big for the intended applications. However, {\it if we replace the doubling Whitney's balls with their holomorphic images} (as we do in the definition of doubling coverings), we can hope to get a bound of the form
\be \label{eq.our.conjecture}
\kappa(\U) \le K_1(\log ({1}/{\delta}))^{K_2}.
\ee

This kind of bounds were conjectured (in different forms) in \cite{FY17,FY,Y87,Y91,Y08,Y15}. A special case was settled in \cite{FY17}, where we confirm the expected bound (with $K_2 = 1$!) in case of regular level hypersurfaces $Y_c = \{P = c\}$ for polynomials $P$ with non-degenerated critical points. However, the method of \cite{FY17} cannot be directly extended to the polynomials $P$ with non-isolated singularities.

On the other hand, in \cite{FY} we prove the bound of the form \eqref{eq.our.conjecture}, (also with $K_2 = 1$!), for the length of the ``doubling chains'', joining any two points in the $\delta$-complement of a zero-level hypersurface of a polynomial. Thus, for the length of the chains and hence, for the Kobayashi distance, the bound \eqref{eq.our.conjecture} was, essentially, confirmed with $K_2 = 1$ and with $K_1$ depending only on the {\it degree $d$} of $P$.

Let us introduce some notatios. Let $P(z) = \sum_{|\alpha| \le d}a_\alpha z^\alpha$ be a complex polynomial of degree $d$ in $\C^n$ written in the usual multi-index notations $z = (z_1,\ldots,z_n) \in \C^n$, $\alpha = (\alpha_1,\ldots,\alpha_n)\in \N^n$, $|\alpha| = \sum_{i = 1}^n |\alpha_i|$ and $z^\alpha = z_1^{\alpha_1}\cdots z_n^{\alpha_n}$. We say that $P$ is normalized if $\|P\| : = \sum_{|\alpha| \le d}|a_\alpha| = 1$. We denote by
$$
Y_c = \{P = c\}\subset \C^n
$$
the $c$-level hypersurface of $P$.

In this paper we provide asymptotic upper bounds on the complexity in two (closely related) situations, we confirm {\it for the total doubling coverings and not only for the chains} the expected bounds of the form \eqref{eq.our.conjecture}. This is done in a rather general setting, i.e. for the $\delta$-complement of a polynomial zero-level hypersurface $Y_0$ and for the regular level hypersurfaces $Y_c$ themselves with no assumptions {\it on the singularities} of $P$. The coefficient $K_2$ in \eqref{eq.our.conjecture} is the ambient dimension $n$ in the first case and $n-1$ in the second case. However, the question of a uniform behavior of the coefficient $K_1$ in (\ref {eq.our.conjecture}) remains open.

\begin{thm} [Complement of zero-level hypersurfaces] \label{thm-complement}
Let $Y_0$ be the zero-level hypersurface of $P$ and let $G_\delta = B_1^n\setminus Y_0^\delta$, where $Y_0^\delta$ is a $\delta$-neighborhood of $Y_0$ for $0<\delta<\kappa_1$. There exists a doubling covering $\U$ of $G_\delta$ in $\C^n\setminus Y_0$ so that
$$
\kappa(\U) \le C_1 (\log({c_1}/{\delta}))^n ,
$$
where $C_1, c_1, \kappa_1>0$ are constants depending on $n,d$ and $P$.
\end{thm}

\begin{thm} [Regular level hypersurfaces] \label{thm-regular}
Let $Y_c$ be a regular level hypersurface of $P$ and let $\bar Y_c = Y_c \cap B_1^n$. Let $\rho$ be the distance of $c$ to the set of singular values of $P$. We assume that $0<\rho<\kappa_2$. There exists a doubling covering $\U$ of $\bar Y_c$ in $Y$ so that
$$
\kappa(\U) \le C_2 (\log({c_2}/{\rho}))^{n-1} ,
$$
where $C_2, c_2, \kappa_2>0$ are constants depending on $n,d$ and $P$.
\end{thm}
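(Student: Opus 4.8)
The plan is to reduce the statement for a regular level hypersurface $Y_c$ to the situation already handled in Theorem \ref{thm-complement}, by working on $Y_c$ itself as the ambient manifold and using the function $P$ near the level $c$ as a ``coordinate transversal'' to $Y_c$. Concretely, since $c$ is a regular value, the implicit function theorem gives, near each point of $\bar Y_c$, a local fibration $P^{-1}(\{|w-c| < \text{small}\})$ diffeomorphic to $Y_c^{\rm loc} \times \text{disk}$; the obstruction to doing this globally on $\bar Y_c$ with good quantitative control is exactly governed by how close $c$ is to a singular value, i.e.\ by $\rho$. So I would first establish a quantitative tubular neighborhood statement: on the $\eta$-neighborhood of $\bar Y_c$ inside $\C^n$, with $\eta$ comparable to a fixed power of $\rho$, the gradient $\nabla P$ is bounded away from zero, and hence $P$ restricts to a submersion there with controlled derivatives. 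This lets me treat $\bar Y_c$ as sitting inside a complex manifold on which a doubling covering by charts of $\C^n$ can be pushed forward, but now I only need to cover an $(n-1)$-dimensional set, which is where the exponent drops from $n$ to $n-1$.

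The core of the argument is then a resolution-of-singularities-plus-preparation step, paralleling the proof of Theorem \ref{thm-complement} but one dimension lower. I would apply resolution of singularities to a suitable algebraic model associated to $P$ (for instance the discriminant locus of $P$ in the $c$-variable, together with the branch locus controlling the singular values), obtaining a proper map $\sigma$ from a smooth variety under which the relevant hypersurface becomes a normal crossings divisor. On the smooth model, preparation (in the sense of the Weierstrass/o-minimal preparation theorems, as used in the subanalytic parts of the paper) gives, in each chart, monomial-type normal forms for the relevant functions, so that the distance $\rho$ to the singular values translates into a single monomial $|u^\beta| \gtrsim \rho$. Over the region where one such monomial is $\gtrsim \rho$, one can cover the corresponding piece of (the pullback of) $\bar Y_c$ by $O((\log(1/\rho))^{n-1})$ doubling charts: in monomial coordinates this is the standard estimate that a product of annuli $\{\rho^{a_i} \le |u_i| \le 1\}$ needs logarithmically many doubling balls per coordinate, and there are only $n-1$ free coordinates along the hypersurface direction after quotienting out the transversal $P$-direction. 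Composing these charts with $\sigma$ (which is analytic and, on the relevant compact piece, has controlled distortion), and then with the local trivialization of the $P$-fibration from the first step, yields doubling charts for $\bar Y_c$ itself; summing over the finitely many charts of the resolution gives the bound $C_2(\log(c_2/\rho))^{n-1}$.

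Two points need care. First, one must check that composing with $\sigma$ and with the tubular-neighborhood trivialization keeps the charts \emph{doubling} with a uniform $\gamma>1$: this is the usual issue that $\sigma$ can have large derivatives near the exceptional divisor, and it is resolved exactly as in Theorem \ref{thm-complement} by shrinking the balls by a controlled factor and noting that the number of charts only grows by a multiplicative constant depending on $n,d,P$. Second, one must verify that the exceptional divisor and the singular-value locus interact in the expected monomial way, so that ``distance $\rho$ to singular values'' upstairs is comparable to a monomial bound; this is where preparation does the real work, and it is the step I expect to be the main obstacle, since it requires the quantitative, rather than merely qualitative, form of preparation — one needs explicit control of the exponents $\beta$ and of the comparison constants in terms of $d$ and $n$, exactly as in the power-subanalytic estimates invoked elsewhere in the paper. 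Once that comparison is in hand, the counting of doubling charts is the same elementary logarithmic estimate as in the zero-level case, with one fewer coordinate.
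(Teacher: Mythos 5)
Your proposal gets the overall shape right (resolve to monomialize, then count logarithmically over an $(n-1)$-dimensional set), but the concrete construction that actually produces the bound is missing, and two of your intermediate steps do not match what the paper does and would not clearly work. The paper resolves the level hypersurface $Y_0$ (and implicitly $Y_{c_0}$ for each of the finitely many singular values $c_0$), obtaining coordinates in which $P\circ\sigma(x)=U(x)x^\alpha$. The crucial step you omit is that the identity $|U(x)x^\alpha|=|c|$ forces $|x_j|\geq(|c|/C_y)^{1/\alpha_0}$ for every coordinate in the monomial, so $\sigma^{-1}(Y_c)\cap W_y$ projects onto the punctured polydisc $Q_{n-1}^\eta$ with $\eta$ a power of $|c|$; and then a direct computation (Lemma~\ref{lem:deriv.g}) shows that $\partial_{x_1}\bigl(U(x)x^\alpha\bigr)\neq 0$ for $0<|x_1|\leq\frac14$, so $Y_c$ is an $\alpha_1$-valued analytic graph $x_1=g(\bar x)$ over $Q_{n-1}^\eta$. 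One then covers $Q_{n-1}^\eta$ by the already-established doubling covering of Proposition~\ref{prp:doubl.cov.S.eta} in $n-1$ variables and composes with the branches of $g$. The exponent $n-1$ comes from this explicit graph presentation, not from a heuristic of ``quotienting out the transversal $P$-direction.''

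Your substitute steps would not carry the argument. A quantitative tubular neighborhood with a lower bound on $\nabla P$ only yields local graph charts with bounded geometry, not the key combinatorial structure that the charts accumulate geometrically toward the exceptional divisor --- that structure comes from the monomial form after resolution and the resulting punctured-polydisc picture. ``Resolving the discriminant locus of $P$ in the $c$-variable'' is not a well-posed target here (the set of singular values of a polynomial is finite), and it is not what the paper resolves; the paper resolves $Y_0$ itself, and $\rho$ enters only through the size $\eta$ of the punctured polydisc. Finally, the appeal to Weierstrass/o-minimal preparation is misplaced in this complex setting: preparation is the tool of Section~\ref{sec:real} for the real subanalytic analogues, whereas in Section~\ref{sec:cov.hypersurface} the monomial normal form comes from resolution of singularities alone. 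Filling the gap means supplying the $\eta$-lower bound on the coordinates, the branch-regularity computation, and the composition of Proposition~\ref{prp:doubl.cov.S.eta} with the multi-valued graph.
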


In Section \ref{sec-thms-1.1-and-1.2} we provide a doubling covering for $G_\delta$ in the monomial case, and we do so also for $\bar Y_c$. On this base, using resolution of singularities, we prove Theorems \ref{thm-complement} and \ref{thm-regular}.

In a somewhat different setting and under the name ``analytic parametrizations'', doubling coverings were essentially introduced in \cite{Y91} as a tool for handling topological entropy and other similar dynamical invariants, of real analytic mappings. We refer to \cite{FY17,FY,Y91,Y08,Y15} for further developments and for some discussions on the connections with bounding the density of rational points on analytic varieties in diophantine geometry and other applications.

In Section \ref{sec:real} we study this slightly different setting of ``analytic parametrizations'' using a-charts, recalled in Definition \ref{defn:acu} and introduced first in \cite[Definition 2.1]{Y91} under the name acu's. We provide analogues of Theorems \ref{thm-complement} and \ref{thm-regular} for real semi-algebraic sets and with a-charts.

Let us give our main results. Write $I$ for the real interval $[-1,1]$ and for each $r>0$ write $\Delta_r$ for the complex disk $\{z\in\CC\mid |z|\leq r\}$.
As usual, for a subset $A\subset \RR^\ell$, we call a function $f:A\to\RR^n$ real analytic if there exists an open neighborhood $O$ of $A$ and a real analytic function $O\to\RR^n$ whose restriction to $A$ is $f$.
We recall Definition 3.1 from \cite{Y08}, where they are called analytic $1$-chart in full and a-charts in short.
\begin{defin}[a-charts] \label{defn:acu}
A real analytic mapping $\psi: I^\ell\to \RR^n$ is called an a-chart if it can be extended to a holomorphic mapping $\tilde \psi:\Delta_3^\ell\to \CC^n$ such that moreover
$\tilde \psi(z)- \psi(0)$ lies in $\Delta_1^n$ for each $z\in \Delta_3^\ell$.
\end{defin}

For a set $S\subset \RR^m$ and $\delta>0$, by the $\delta$-neighborhood of $S$ we mean the set of points $x\in\RR^m$ that lie at distance at most $\delta$ to $S$, and we write $S_\delta$ to denote this tube. Here, the distance between $x$ and $S$ is defined as the infimum over all $s\in S$ of $\max_{i=1}^m |x_i-s_i|$.

The following is a variant in general dimension of Theorem 3.1 of \cite{Y08} and of the complex case of Theorem \ref{thm-complement} above. Note that it is more uniform than Theorem \ref{thm-complement}.

\begin{thm} \label{thm:semi-alg-a-chart}
Let $X\subset I^n$ be a semi-algebraic set of dimension $m>0$. Then, there exist a semi-algebraic subset $S\subset I^n$ of dimension $<m$ and a constant $C$ such that the following holds. For each $\delta>0$ with $\delta\leq 1$ there are semi-algebraic a-charts
$$
\psi_i:I^m\to \RR^n \mbox{ for } i=1,\ldots, \kappa(\delta)
$$ 
with
$$
 \kappa(\delta)\leq C(\log 1/\delta)^m
$$
such that the union of the $\psi_i(I^m)$ contains $X\setminus S_\delta$, where $S_\delta$ is the $\delta$-neighborhood of $S$.
Furthermore, $C$ and the complexity of $S$ are bounded in terms of the complexity of $X$.
\end{thm}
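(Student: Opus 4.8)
The plan is to reduce the general semi-algebraic case to the monomial (or, more precisely, the normal-crossings) case via resolution of singularities, just as Theorems \ref{thm-complement} and \ref{thm-regular} are proved, but keeping careful track of the real structure and of the passage to a-charts. First I would use a cell decomposition (or stratification) of $X$ adapted to the problem: decompose $X$ into finitely many semi-algebraic cells, on the top-dimensional $m$-cells $X$ is (after a semi-algebraic change of coordinates) the graph of a real analytic semi-algebraic map over an open box, and the lower-dimensional cells get absorbed into the exceptional set $S$ together with the boundaries of the top cells. Thus it suffices to produce, for a single real analytic semi-algebraic map $f$ defined on a semi-algebraic open subset $U$ of $I^m$ whose closure meets some lower-dimensional bad locus $S$, a collection of $O((\log 1/\delta)^m)$ a-charts covering the part of the graph lying over $U\setminus S_\delta$.

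Next, I would invoke resolution of singularities (in the real semi-algebraic category, e.g. via a composition of blow-ups along smooth centers) to obtain a proper semi-algebraic map $\sigma:M\to \overline{U}$ from a smooth manifold $M$ such that the preimage of the bad locus $S$ (together with anything where $f$ fails to extend analytically or the coordinates degenerate) is a normal crossings divisor $E$ on $M$, and $f\circ\sigma$ extends analytically past $E$ in suitable local coordinates; locally near each point of $M$ one has coordinates $(u_1,\dots,u_m)$ in which $E=\{u_1\cdots u_k=0\}$ and the composite map is analytic. The heart of the argument is then the monomial/normal-crossings model: away from the $\delta$-neighborhood of $E$, the region $\{|u_1\cdots u_k|\geq \delta'\}\cap I^m$ (for $\delta'$ a fixed power of $\delta$) can be covered by $O((\log 1/\delta')^k)=O((\log 1/\delta)^m)$ boxes on each of which there is a holomorphic reparametrization turning it into the standard polydisc with the required $3\times$ enlargement — this is exactly the one-dimensional ``logarithmic'' covering of an annulus $\{\delta'\le |u|\le 1\}$ by $O(\log 1/\delta')$ scaled disks, taken in a product over the $k$ relevant coordinates. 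Composing these reparametrizations with $\sigma$ and with the graph map $x\mapsto(x,f(x))$ yields the desired a-charts after rescaling by a uniform constant to arrange the $\Delta_3$-extension and the $\Delta_1$-image bound in Definition \ref{defn:acu}; finitely many coordinate patches on $M$ (their number bounded in terms of the complexity of $X$) combine to give the full covering, and the images of $E$ and of the lower cells under $\sigma$ are swallowed into $S$.

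The main obstacle, and the point requiring the most care, is controlling the \emph{analytic extension radius} uniformly, i.e. verifying that the holomorphic reparametrizations really extend to $\Delta_3^m$ with image in a unit polydisc. Two issues must be handled: first, that the resolution map $\sigma$ and the map $f$ extend holomorphically to a definite-size complex neighborhood of each real coordinate patch with derivative bounds depending only on the complexity of $X$ (here one uses that $\sigma$ is a composition of blow-ups, hence given by explicit polynomial/rational formulas of controlled degree, and that $f$, being semi-algebraic and real analytic on the relevant cell, satisfies such bounds by an argument in the o-minimal/subanalytic preparation spirit); and second, that in the normal-crossings coordinates the annular ``log-covering'' disks of radii comparable to the $u_i$ can be expanded by a fixed factor $3$ while staying inside the domain where the composite is holomorphic and bounded — this forces one to choose $\delta'$ as an appropriate fixed power $\delta^{c}$ of $\delta$ and to shrink once by a constant, which is harmless for the asymptotic bound. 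The uniformity ``in the complexity of $X$'' then follows because all the combinatorial data — the number of cells, the number of blow-ups, the degrees of the defining polynomials, hence the constant $C$ and the complexity of $S$ — are bounded by the complexity of $X$, while the exponent $m$ in $(\log 1/\delta)^m$ comes solely from the product structure of the model covering and is thus absolute.
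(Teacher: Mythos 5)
Your plan is correct in outline but takes a genuinely different route from the paper, and in one place it leaves a real gap. The paper does \emph{not} use resolution of singularities here; it explicitly states at the start of Section~\ref{sec:real} that the reduction to the monomial model is done ``not exactly by resolving the singularities, but, by using a pre-parameterization result from \cite{CPW}''. Concretely, the paper proves Theorem~\ref{thm:semi-alg-a-chart} as a special case of the family statement Theorem~\ref{thm:semi-a-chart:g}, whose proof goes: (i) reduce to the case where $\cX$ is the graph of a definable function over an open cell; (ii) apply Theorem~\ref{pre-param} to get finitely many reparameterizing maps $\varphi_i$ that are a-b-m with an associated bounded-monomial map $b_i$ of \emph{bounded $C^1$-norm}; (iii) use the $C^1$-bound to conclude that each $\varphi_{i,t}$ is Lipschitz, so that restricting the domain to $(\delta/c,1)^m$ still covers $X_t\setminus S_{t,\delta}$; (iv) cover the restricted bounded-monomial map by $O((\log 1/\delta)^m)$ a-charts via Lemma~\ref{lem:b-m-a-chart}, and compose. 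Your normal-crossings reduction plus the product of one-dimensional logarithmic coverings is the same combinatorics as the paper's Lemma~\ref{lem:b-m-a-chart}, so that part of your plan is essentially right.

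The genuine gap is precisely in the step you flag as the ``main obstacle'': controlling the holomorphic extension radius uniformly, and obtaining the ``Furthermore'' clause that $C$ and the complexity of $S$ are bounded in terms of the complexity of $X$. You argue that since $\sigma$ is a composition of blow-ups ``given by explicit polynomial/rational formulas of controlled degree'' and $f$ is semi-algebraic, one gets derivative bounds ``by an argument in the o-minimal/subanalytic preparation spirit''. But the existence proofs of resolution of singularities you invoke do not come with effective, complexity-uniform bounds on the number or degree of the blow-ups, nor on the sizes of the coordinate patches, and the real analytic extension radius of $f$ near the cell boundary can a priori degenerate arbitrarily — blow-ups fix this only after one has already established control over how many are needed and where they are centered. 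The point of the pre-parameterization route is that the two properties you need — (a) $f\circ\sigma$ factors through a bounded map $b$ into a compact set on whose neighborhood a \emph{fixed} analytic $F$ is defined, so the holomorphic extension radius is automatically uniform, and (b) $b$ has bounded $C^1$-norm, so the $\delta$-shrinkage translates into covering $X\setminus S_\delta$ — both fall out of Theorem~\ref{pre-param}, and its family (``$T$-parameterized'') formulation gives the complexity-uniformity by taking $T$ to be a parameter space of all semi-algebraic sets of a given format. If you want to rescue the resolution-of-singularities route you would need an effective, family-uniform version of it; otherwise you should replace that step with a preparation/pre-parameterization theorem as the paper does.
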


Next comes our analogue for real semi-algebraic sets of any dimension of the complex result of Theorem \ref{thm-regular} above. Again, it is more uniform than Theorem \ref{thm-regular}, and, more flexible in the dimension of the family parameters.

\begin{thm} \label{thm:fibers-semi-alg-a-chart}
Let $f:I^n\to I^k$ be a semi-algebraic function. Suppose that the nonempty fibers of $f$ have dimension $m$. Then, there exist a constant $C$ and a subanalytic set $S\subset I^k$ of dimension less than $k$ such that, for any $\delta>0$ with $\delta\leq 1$ and for any $c\in I^k$ of distance at least $\delta$ to $S$ there are a-charts
$$
\psi_i:I^{m}\to \RR^n \mbox{ for } i=1,\ldots, \kappa(\delta)
$$
with
$$
\kappa(\delta)\leq C \log (1/\delta)^{m}
$$
such that the union of the $\psi_i(I^{m})$ contains $f^{-1}(c)$.
\end{thm}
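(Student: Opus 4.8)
The plan is to prove Theorem~\ref{thm:fibers-semi-alg-a-chart} by running the argument behind Theorem~\ref{thm:semi-alg-a-chart} \emph{relatively over the parameter $c\in I^k$}, and by reading off from it a bound that becomes uniform once $c$ avoids a thin exceptional set. The first step is to apply a preparation theorem --- or resolution of singularities, as in the proofs of Theorems~\ref{thm-complement} and \ref{thm-regular} --- to the coordinate functions $f_1,\dots,f_k$ of $f$ simultaneously. This produces a finite partition of $I^n$ into semi-algebraic pieces $D_1,\dots,D_N$ and, on (a subanalytic chart of) each $D_\nu$, coordinates $y=(y_1,\dots,y_{n_\nu})$ with $n_\nu=\dim D_\nu$ ranging in a standard box, in which every $f_j$ acquires the prepared form $f_j=u_j(y)\,y^{\beta^{(j,\nu)}}$ with $u_j$ a unit --- bounded away from $0$ and $\infty$ together with its first derivatives after a fixed shrinking of the box. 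Here $N$, the exponent vectors $\beta^{(j,\nu)}$, and all constants are controlled by the complexity of $f$. The exceptional set $S\subset I^k$ is then taken to be the union of the lower-dimensional cells of a cell decomposition of $I^k$ adapted to this prepared family; over each top-dimensional cell of that decomposition the family $\{f^{-1}(c)\}_c$ is ``equisingular'', while $S$, being a finite union of cells of dimension $<k$, is a subanalytic set of dimension $<k$ --- it is the passage through resolution, or through the subanalytic preparation theorem, that prevents $S$ from being taken semi-algebraic.

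Now fix $c\in I^k$ with $\dist(c,S)\ge\delta$ and a piece $D_\nu$ meeting $f^{-1}(c)$. In the coordinates $y$ the corresponding part of the fibre is the monomial variety $\{\,y^{\beta^{(j,\nu)}}=c_j/u_j(y):j=1,\dots,k\,\}$ inside the box, of dimension at most $m$. Since $c$ lies at distance at least $\delta$ from $S$, and $S$ was arranged to contain the degeneracy locus of the family, a uniform \L{}ojasiewicz inequality yields $|y^{\beta^{(j,\nu)}}|\ge\delta^{c_0}$ on this variety, for a fixed exponent $c_0$ and every $j$ with $\beta^{(j,\nu)}\neq 0$. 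Passing to logarithmic coordinates $w_i=-\log|y_i|$ (after a bounded case split on the signs of the $y_i$) and using that the $|u_j|$ are bounded, every monomial-constrained coordinate then satisfies $0\le w_i\le L$ with $L=O(\log(1/\delta))$, while the remaining, ``free'', coordinates are each covered by $O(1)$ a-charts --- including a single one ``centred at $0$'' to reach the nearby coordinate hyperplane. Up to a uniformly bounded error the variety is, in the monomial-constrained coordinates, an affine subspace of dimension at most $m$ inside a cube of side $L$, hence meets only $O(L^m)=O((\log(1/\delta))^m)$ of the integer unit cubes $\prod_i[\ell_i,\ell_i+1]$. On each relevant unit cube one reverses the logarithmic and the preparation changes of variables: on the corresponding dyadic box $\{|y_i|\sim 2^{-\ell_i}\}$ the rescaled variety is a real analytic set of bounded complexity sitting in a unit box, which, after one further subdivision by a fixed complexity-dependent factor and a permutation of the coordinates, is the graph of a real analytic map extending holomorphically to $\Delta_3^m$ with image in a translate of $\Delta_1^n$, i.e.\ an a-chart in the sense of Definition~\ref{defn:acu}. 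Composing with the analytic, bounded-derivative reparametrization of $D_\nu$ costs only another fixed factor, and summing over $\nu=1,\dots,N$ produces at most $C(\log(1/\delta))^m$ a-charts whose images cover $f^{-1}(c)$, since the $D_\nu$ cover $I^n$.

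I expect the main obstacle to be the quantitative control of $S$: one must choose the cell decomposition of $I^k$ so that it is compatible with the prepared family and, at the same time, coarse enough that over its top-dimensional cells the bound $|y^{\beta^{(j,\nu)}}|\ge\delta^{c_0}$ really does follow from $\dist(c,S)\ge\delta$; this is precisely a uniform \L{}ojasiewicz-type inequality comparing the distance to $S$ with the minima of the prepared monomials along the fibres. One must also verify that the resulting $S$ is genuinely of dimension $<k$ and not full-dimensional --- equivalently, that all degeneration of the family $\{f^{-1}(c)\}_c$ is concentrated on a thin subanalytic subset of the base --- which is where Sard's theorem for a resolution of $f$ enters. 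By contrast, the remaining ingredients are routine: the sign case split; the matching of a-charts along the boundaries of the $D_\nu$ and of the dyadic boxes, handled as usual by replacing each chart by a bounded number of slightly smaller concentric copies so that the $\Delta_3^m\to\Delta_1^n$ extension property is preserved; and the extension of the whole scheme to subanalytic and power-subanalytic $f$, which uses the corresponding preparation theorems without change.
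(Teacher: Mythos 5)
Your approach is broadly in the same spirit as the paper's (preparation to a monomial form, then a logarithmic box-counting argument producing roughly $(\log 1/\delta)^m$ a-charts), but the technical route you take to control the parameter set $S$ and to get the lower bound on the prepared coordinates differs genuinely from the paper's. You apply resolution of singularities (or preparation) to $f$, declare $S$ to be the union of the low-dimensional cells of an adapted cell decomposition of $I^k$, and then invoke a \L{}ojasiewicz-type inequality to deduce $|y^{\beta^{(j,\nu)}}|\geq\delta^{c_0}$ from $\dist(c,S)\geq\delta$. The paper instead proves Theorem~\ref{thm:fibers-semi-alg-a-chart} as a special case of Theorem~\ref{thm:fibers-a-chart:g:g}, whose proof rests on a new \emph{rectilinear} pre-parameterization theorem (Theorem~\ref{recti-pre-param}) obtained by combining the pre-parameterization of \cite{CPW} with the rectilinear preparation of \cite{CM13}. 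In that setup the cell over $t$ is of the form $B_t\times(0,1)^{m-\ell}$ with $\overline{B_t}\subset(0,1)^\ell$, the associated bounded-monomial map $b$ has only integer exponents and bounded $C^1$-norm, and one takes $S$ to be simply $\overline{T}\setminus T$. The lower bound $c\delta^M< x_1,\ldots,c\delta^M<x_\ell$ for $t\in T\setminus S_\delta$ then comes for free from the rectilinear structure and the bounded $C^1$-norms of the walls, with no separate \L{}ojasiewicz argument. The remaining coordinates $x_{\ell+1},\ldots,x_m$ run over the full interval $(0,1)$, and the fact that their exponents in $b$ are forced to be non-negative integers means a bounded number of a-charts suffices in those directions; the count of $O((\log1/\delta)^m)$ then follows from a variant of Lemma~\ref{lem:b-m-a-chart} tailored to the rectilinear form.

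The step you flag as the main obstacle --- constructing $S$, verifying $\dim S<k$, and establishing the \L{}ojasiewicz inequality linking $\dist(c,S)$ to the size of the prepared monomials --- is indeed the crux, and it is exactly the point that Theorem~\ref{recti-pre-param} is designed to make transparent. In your formulation it is not clear how one would prove, without something playing the role of the rectilinear structure and the integer-exponent/bounded-$C^1$ control, that the single constant $c_0$ works uniformly over the whole top-dimensional locus; one would have to carry out a separate preparation of the walls and of the monomial map over the parameter space, which is essentially re-deriving the content of Theorem~\ref{recti-pre-param}. A related subtlety your sketch glides over is why the ``free'' coordinates are genuinely harmless: it is the combination of rectilinearity (they range over the whole interval $(0,1)$) and the fact that their exponents in $b$ must be non-negative integers (by boundedness of $b$) that allows a single a-chart to absorb each such direction; without that, the box-counting step would overcount. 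Finally, note that your comment on why $S$ can be taken only subanalytic rather than semi-algebraic aligns with the paper: the rectilinear preparation of \cite{CM13} is presently available only in the subanalytic setting, and the paper leaves the semi-algebraic (and power-subanalytic) version as an open question.
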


The proofs of Theorems \ref{thm:semi-alg-a-chart} and \ref{thm:fibers-semi-alg-a-chart} and the definition of subanalytic sets are given in Section \ref{sec:real}, as well as their corresponding generalizations for subanalytic and power-subanalytic sets as Theorems \ref{thm:semi-a-chart:g} and \ref{thm:fibers-a-chart:g:g}.
It may be interesting to see whether $S$ can be taken semi-algebraic as well in Theorem \ref{thm:fibers-semi-alg-a-chart} (see also the two questions at the very end of the paper).

\section{Proof of theorems \ref{thm-complement} and \ref{thm-regular}} \label{sec-thms-1.1-and-1.2}

The main idea behind the proofs of these two results, is to make a reduction from the general case of a polynomial $P$ of degree $d$ in $\C^n$ to the monomial case. This is done by applying the following basic version of resolution of singularities (see, e.g. \cite{BM91}).

\begin{thm} \label{Res.Sing}
Let $P$ be a polynomial of degree $d$ in $\C^n$, and let $Y_0$ be its zero-level hypersurface. There exist a regular $n$-dimensional algebraic variety $X$ and a proper mapping $\sigma:X\to \C^n$ so that for any point $y\in \sigma^{-1}(Y_0) \subset X$ there is a neighborhood $W_y$ of $y$ in $X$ and a local coordinate system $x_1,\ldots,x_n$ in $W_y$, in which
$$
P\circ \sigma (x) = U(x) x^\alpha , \quad \forall x\in W_y
$$
where $x^\alpha = \prod_{i = 1}^n x_i^{\alpha_i}$, $\alpha = (\alpha_1,\ldots,\alpha_n)\in\N^n$, \ $\min_{i}\alpha_i \ge 1,$ and $U(x)$ is a non-vanishing function (clearly, $U$ depends on $y$). In particular, the preimage $\sigma^{-1}(Y_0)$ coincides locally with the union $Z$ of the coordinate hyperplanes $Z_i = \{x_i = 0\}$ for $i = 1,\ldots,l_y,$ where $l_y$ is the number of the local coordinates, actually apearing in the monomial $x^\alpha$.
\end{thm}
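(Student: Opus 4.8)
The statement to prove is the basic resolution of singularities theorem (Theorem \ref{Res.Sing}): given a polynomial $P$ of degree $d$ on $\C^n$ with zero-level hypersurface $Y_0$, produce a regular algebraic variety $X$ and a proper map $\sigma\colon X\to\C^n$ such that locally $P\circ\sigma(x)=U(x)x^\alpha$ with $U$ non-vanishing and $\min_i\alpha_i\ge 1$, so that $\sigma^{-1}(Y_0)$ is locally a union of coordinate hyperplanes.

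\medskip

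The plan is to invoke Hironaka's embedded resolution of singularities directly, in the form available (over $\C$, or over any field of characteristic zero) from the references, e.g. Bierstone--Milman \cite{BM91}. Concretely, I would proceed as follows. First, apply embedded resolution to the hypersurface $Y_0=\{P=0\}\subset\C^n$: there is a regular $n$-dimensional variety $X$ and a proper birational morphism $\sigma\colon X\to\C^n$, obtained as a composition of blow-ups along smooth centers, which is an isomorphism over $\C^n\setminus Y_0$ (more precisely over the complement of the singular locus), and such that the total transform $\sigma^{-1}(Y_0)$ is a simple normal crossings divisor. Properness of $\sigma$ is automatic since blow-ups along closed centers are proper and properness is preserved under composition. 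Second, unwind what ``simple normal crossings'' means pointwise: at each $y\in\sigma^{-1}(Y_0)$ there is a coordinate neighborhood $W_y$ with coordinates $x_1,\dots,x_n$ in which the ideal of the divisor $\sigma^{-1}(Y_0)$ is generated by a monomial $x_1\cdots x_{l_y}$ for some $l_y\le n$ (after reindexing), i.e. the divisor is the union of the coordinate hyperplanes $Z_i=\{x_i=0\}$, $i=1,\dots,l_y$. Third, examine the pullback function $P\circ\sigma$ itself on $W_y$: since its zero set is exactly $\{x_1\cdots x_{l_y}=0\}$ and $\mathcal{O}_{X,y}$ is a regular local ring (hence a UFD), the function $P\circ\sigma$ factors as $U(x)\,x_1^{\alpha_1}\cdots x_{l_y}^{\alpha_n}$ where the $x_i$ are the irreducible factors vanishing on the components through $y$, each $\alpha_i\ge 1$, and $U$ is a unit in $\mathcal{O}_{X,y}$, hence non-vanishing in a possibly smaller neighborhood. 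Renaming, $P\circ\sigma(x)=U(x)x^\alpha$ with $\min_i\alpha_i\ge 1$ over the coordinates that actually appear, which is precisely the asserted form; the last sentence of the theorem (that $\sigma^{-1}(Y_0)$ coincides locally with the union of the $Z_i$ for $i\le l_y$) is then immediate.

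\medskip

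The only genuinely substantive input is Hironaka's theorem, which I would cite rather than reprove; so strictly speaking there is no ``main obstacle'' in the sense of a hard argument to be supplied — the work is packaging. The one point requiring mild care is bookkeeping: ensuring that the monomial appearing in $P\circ\sigma$ uses exactly the coordinates $x_1,\dots,x_{l_y}$ cutting out the components of $\sigma^{-1}(Y_0)$ passing through $y$ (and no spurious ones with exponent $0$), so that the normalization $\min_i\alpha_i\ge 1$ holds for the indices $i=1,\dots,l_y$ that ``actually appear'' in $x^\alpha$. This follows from the fact that $\sigma$ is an isomorphism away from $Y_0$, so the pullback $P\circ\sigma$ can vanish only along the exceptional-and-strict-transform divisor, whose local equation is $x_1\cdots x_{l_y}$; hence no $x_i$ with $i>l_y$ can divide $P\circ\sigma$. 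The remaining verifications — properness, regularity of $X$, and the UFD factorization — are standard and routine.
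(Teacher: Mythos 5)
The paper does not actually give a proof of Theorem~\ref{Res.Sing}: it is stated as a known ``basic version of resolution of singularities'' with a pointer to Bierstone--Milman \cite{BM91}, and the rest of Section~\ref{sec-thms-1.1-and-1.2} simply uses it. Your proposal does the same thing — cite embedded resolution so that $\sigma^{-1}(Y_0)$ is a simple normal crossings divisor, then use that $\mathcal{O}_{X,y}$ is a regular local ring (hence a UFD) to factor $P\circ\sigma$ as a unit times a monomial in the local coordinates cutting out the components through $y$ — and that unpacking is correct and standard, so your argument matches the paper's intent while spelling out the routine bookkeeping (regularity $\Rightarrow$ UFD, no spurious factors $x_i$ with $i>l_y$ since the zero set of $P\circ\sigma$ is exactly $\sigma^{-1}(Y_0)$) that the paper leaves implicit.
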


Accordingly, in order to construct a doubling covering either for $G_\delta = B_1^n\setminus Y_0^\delta$, or for $\bar Y_c = B_1^n \cap Y_c$ (for sufficiently small $\delta$ and $c$), it is enough to construct such coverings in each of a finite number of the neighborhoods $W_y$ in $X$, covering the compact preimage of $Y_0\cap B^n_1.$

In case of $G_\delta$ we have also to cover the part of $G_\delta$ out of the union of $W_y$, but this is immediate, with the number of charts not depending on $\delta$.

\subsection{Proof of Theorem \ref{thm-complement}} \label{sec:compl}

The reduction achieved above allows us to restrict considerations to the following case: in the appropriate system of local coordinates for any $x=(x_1,\ldots,x_n) \in W_y$ we have, as above, $P\circ \sigma (x) = U(x) x^\alpha$. Without lost of generality we may assume that in local coordinates $x_1,\ldots,x_n$ the neighborhood $W_y$ is defined by $|x_i|\le 1$, $i=1,\ldots,n$, and that there is a constant $C_y>0$ so that
\be \label{eq:U}
\frac1{C_y} \le |U(x)| \le C_y , \quad \forall x \in W_y .
\ee
We also assume below that $l_y=n$. The case $l_y<n$ is treated exactly in the same way, with better bounds.

\medskip

The following proposition from \cite{FY} is, essentially, a version of \L ojasiewicz inequality. For our applications it is important to keep all the constant explicit and depending only on $n,d$. Notice however, that it is valid only in complex domain.

\begin{prp} \label{prp:dist.value}
Let $P$ be a normalized polynomial of degree $d$ on $\C^n$ and let $Y_0$ be its zero-level hypersurface. Then, for any $x\in B_1^n$ we have
$$
c_{n,d} \dist (x, Y_0)^d \le |P(x)| \le C_{n,d} \dist (x, Y_0) ,
$$
where $c_{n,d}, C_{n,d}>0$ are constants depending only on $n,d$.
\end{prp}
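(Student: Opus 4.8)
I would prove the two bounds separately, since they are of completely different character: the right-hand inequality is a soft Lipschitz estimate, whereas the left-hand one is the genuine \L ojasiewicz content, is the only place where working over $\C$ is essential, and is where the explicit exponent $d$ and the explicit constant must be extracted.

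\emph{Upper bound.} From $\|P\|=1$ we get $|a_\alpha|\le 1$ for every $\alpha$, hence $|P|\le 1$ on $B_1^n$ and, since each monomial of $\partial_iP$ has degree $\le d-1$ and coefficient $\le d$ in absolute value, $|\nabla P|\le L_{n,d}$ on $B_2^n$ for a constant $L_{n,d}$ depending only on $n,d$; thus $P$ is $L_{n,d}$-Lipschitz on $B_2^n$. Since $P$ is non-constant, $Y_0$ is a nonempty closed set, so $x$ has a nearest point $y\in Y_0$. If $|x-y|\ge 1$, then $|P(x)|\le 1\le\dist(x,Y_0)$; if $|x-y|<1$, then $y\in B_2^n$ and $|P(x)|=|P(x)-P(y)|\le L_{n,d}|x-y|=L_{n,d}\dist(x,Y_0)$. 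So $C_{n,d}=\max(1,L_{n,d})$ works.

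\emph{Lower bound.} We may assume $r:=\dist(x,Y_0)>0$ (else $x\in Y_0$ and both sides vanish). Pick a nearest point $y\in Y_0$ and restrict $P$ to the complex line through $x$ in the direction $v=(y-x)/|y-x|$, getting a one-variable polynomial $q(t)=P(x+tv)$ with $\deg q\le d$ and $q\not\equiv 0$. Each root $t_0$ of $q$ satisfies $|t_0|\ge r$, because $x+t_0v\in Y_0$; and $t=|y-x|=r$ is itself a root, so $q$ has a root of modulus exactly $r$. Writing $q(t)=b\prod_{j}(t-t_j)$ over its at most $d$ roots and estimating $|t-t_j|/|t_j|\le 1+1/|t_j|\le 1+1/r$ for $|t|\le 1$, one obtains, in the regime $r\le 1$,
$$\max_{|t|\le 1}|q(t)|\ \le\ |q(0)|\,(2/r)^{d},\qquad\text{equivalently}\qquad |P(x)|=|q(0)|\ \ge\ (r/2)^{d}\max_{|t|\le 1}|q(t)|.$$
It then remains to bound $\max_{|t|\le 1}|q(t)|=\max_{|t|\le1}|P(x+tv)|$ below by a constant depending only on $n,d$; here I would isolate the auxiliary claim that there are $\rho=\rho_{n,d}>0$ and $c_{n,d}>0$ with $\sup_{|z-x|\le\rho}|P(z)|\ge c_{n,d}$ for every normalized $P$ of degree $\le d$ and every $x\in B_1^n$ (absorbing $\rho$ into the parametrization of the line). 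Combining the two displays gives $|P(x)|\ge (r/2)^d c_{n,d}$, i.e. the asserted bound with $c_{n,d}'=c_{n,d}/2^d$.

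The main obstacle is this auxiliary sup-bound, and especially making its constant explicit, as the statement insists. Its mere existence is soft: $(P,x)\mapsto\sup_{|z-x|\le\rho}|P(z)|$ is continuous and strictly positive on the compact set $\{\|P\|=1\}\times\overline{B_1^n}$, vanishing being impossible for $P\not\equiv 0$. An explicit value, however, calls for a quantitative polynomial inequality: combine a Cauchy-type estimate $\max_{\text{unit polydisc}}|P|\ge\|P\|/\binom{n+d}{n}$ (pigeonhole on the coefficients, recovered by Cauchy's integral over the torus) with a Bernstein–Walsh / Markov-type comparison of $\sup|P|$ over a fixed polydisc versus over $B_\rho^n(x)$, whose ratio is controlled by $n,d,\rho$ alone. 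A secondary point is that the factorization step uses $r\le 1$: one-variable, degree-one examples show that $c_{n,d}\dist(x,Y_0)^d\le |P(x)|$ cannot survive once the nearest zero is far, so in that regime $\dist(x,Y_0)$ must be read as capped (equivalently, one restricts attention to $x$ near $Y_0$, which is all that the applications in Sections \ref{sec:compl} and onward require), and there the same line argument with $r\ge 1$ gives $|P(x)|\ge 2^{-d}c_{n,d}$ at no cost in $r$.
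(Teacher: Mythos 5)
The paper itself does not prove this proposition; it imports it from \cite{FY}, stating only that it is ``essentially a version of \L ojasiewicz inequality.'' So the proposal must be judged on its own merits. Your upper-bound argument is correct and routine. In the lower bound there is a genuine gap at the final step. You reduce to showing $\max_{|t|\le 1}|q(t)|\ge c_{n,d}$, where $q(t)=P(x+tv)$ and $v$ is the unit vector pointing from $x$ to the nearest zero $y\in Y_0$. You then propose to deduce this from the (true) auxiliary claim $\sup_{|z-x|\le\rho}|P(z)|\ge c_{n,d}$. But the auxiliary claim is a bound for the supremum over a \emph{ball}, while $q$ sees $P$ only along a one-dimensional complex \emph{line} through $x$ in a specific direction; no amount of ``absorbing $\rho$ into the parametrization of the line'' turns the ball into a line. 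Worse, the needed line estimate is false: taking $P(z_1,z_2)=\tfrac12(z_1^2+z_2^2)$ and $x=(\e,0)$, the nearest point of $Y_0=\{z_2=\pm iz_1\}$ is $y=(\e/2,\,i\e/2)$, so $v=\tfrac1{\sqrt2}(-1,i)$ lies on the cone $Y_0$, the quadratic terms cancel, and
\[
q(t)=P(x+tv)=\frac{\e^2}{2}-\frac{\e}{\sqrt2}\,t,\qquad \max_{|t|\le 1}|q(t)|=\frac{\e^2}{2}+\frac{\e}{\sqrt2}\xrightarrow[\e\to 0]{}0.
\]
The degree of the restriction drops precisely because the nearest-zero direction can be asymptotic to $Y_0$, and this kills the argument.

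The rest of your scaffolding is sound, and the fix is small: choose the slicing direction differently. Let $w\in\overline{B_\rho^n(x)}$ attain $\sup_{|z-x|\le\rho}|P(z)|\ge c_{n,d}$ (your auxiliary claim), set $v=(w-x)/|w-x|$ and $q(t)=P(x+tv)$ on $|t|\le\rho$. The key observation you used for the nearest-zero direction in fact holds for \emph{every} direction: any root $t_0$ of $q$ satisfies $x+t_0v\in Y_0$, hence $|t_0|\ge r=\dist(x,Y_0)$. Thus the factorization bound $\max_{|t|\le\rho}|q(t)|\le |q(0)|\big((\rho+r)/r\big)^d$ still applies, and now $\max_{|t|\le\rho}|q(t)|\ge|P(w)|\ge c_{n,d}$ by construction, giving $|P(x)|\ge c_{n,d}\big(r/(\rho+r)\big)^d$. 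Your closing remark that the estimate must be read with $\dist(x,Y_0)$ capped (equivalently, only for $x$ near $Y_0$) is correct and worth keeping: the one-variable family $P(z)=(z-M)/(M+1)$ with $M\to\infty$ shows the stated inequality cannot hold with a uniform $c_{n,d}$ once $\dist(x,Y_0)$ is allowed to be large, though this is harmless for the uses in Section~\ref{sec:compl}.
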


Let $\eta>0$ to be chosen later. For $i = 1,\ldots,n$ we define the set $S_i^\eta$, in local coordinates $x_1,\ldots,x_n$, by $|x_i|<\eta$, and denote by $S^\eta$ a $\eta$-neighborhood of the union of the coordinate hyperplanes $Z$, defined as the union $S^\eta = \bigcup_{i = 1}^n S_i^\eta$.

\begin{cor} \label{cor:monomial1}
Let $\delta >0$, $\alpha_0 = \min_{i = 1}^n \alpha_i \ge 1$ and put $\eta = (\frac{c_{n,d}}{C_y}\delta^d)^{\frac{1}{\alpha_0}}$. Then,
$$
S^\eta \subset \sigma^{-1}(Y_0^\delta) \cap W_y,
$$
where, as above, $Y_0^\delta$ is a $\delta$-neighborhood of $Y_0$.
\end{cor}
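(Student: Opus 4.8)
The plan is to verify the inclusion pointwise: take an arbitrary $x\in S^\eta$ (recall $S^\eta\subset W_y$, so automatically $|x_j|\le 1$ for all $j$) and show directly that $\sigma(x)$ lies in the $\delta$-neighborhood of $Y_0$, using the monomial normal form $P\circ\sigma(x)=U(x)x^\alpha$ together with the \L ojasiewicz-type lower bound in Proposition \ref{prp:dist.value}. Since $x\in S^\eta=\bigcup_i S_i^\eta$, first I would fix an index $i$ with $|x_i|<\eta$.

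Next comes the elementary estimate. For the admissible range $0<\delta<\kappa_1$ we may assume $\eta\le 1$; then
$$
|x^\alpha|=\prod_{j=1}^n |x_j|^{\alpha_j}\le |x_i|^{\alpha_i}<\eta^{\alpha_i}\le \eta^{\alpha_0},
$$
using $|x_j|\le 1$ for the middle factors and $\alpha_i\ge\alpha_0$ together with $\eta\le 1$ for the last inequality. Feeding this into \eqref{eq:U} gives
$$
|P(\sigma(x))|=|U(x)|\,|x^\alpha|< C_y\,\eta^{\alpha_0}=C_y\cdot\frac{c_{n,d}}{C_y}\,\delta^d=c_{n,d}\,\delta^d .
$$
Applying the lower bound of Proposition \ref{prp:dist.value} to $\sigma(x)$ (using that $P$ is normalized and $\sigma(x)\in B_1^n$), we get $c_{n,d}\dist(\sigma(x),Y_0)^d\le |P(\sigma(x))|<c_{n,d}\,\delta^d$, hence $\dist(\sigma(x),Y_0)<\delta$. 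Thus $\sigma(x)\in Y_0^\delta$, i.e. $x\in\sigma^{-1}(Y_0^\delta)$, and since $x\in W_y$ this yields $S^\eta\subset\sigma^{-1}(Y_0^\delta)\cap W_y$, as required.

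There is no genuine obstacle here — the statement is essentially a one-line consequence of Proposition \ref{prp:dist.value} and the monomial normal form, with $\eta$ tuned precisely so that the two constants cancel. The only points that need (routine) attention are: that $\eta\le 1$ on the relevant range of $\delta$ (which is absorbed into the constant $\kappa_1$), and that the points $\sigma(x)$ with $x\in W_y$ indeed lie in $B_1^n$ so that Proposition \ref{prp:dist.value} is applicable — this is guaranteed by the choice of the neighborhoods $W_y$ as covering the compact preimage of $Y_0\cap B_1^n$, shrinking them if necessary.
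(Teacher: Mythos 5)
Your proof is correct and follows essentially the same route as the paper: bound $|P\circ\sigma(x)| = |U(x)x^\alpha|$ by $C_y\eta^{\alpha_0} = c_{n,d}\delta^d$ on $S^\eta$, then apply the lower \L ojasiewicz estimate of Proposition~\ref{prp:dist.value} to conclude $\dist(\sigma(x),Y_0)<\delta$. The paper routes this through an intermediate sublevel set $\hat Y_0^{\delta'}=\{|P|<\delta'\}$ rather than arguing pointwise, but that is only a difference of presentation; you in fact make two small hypotheses explicit that the paper leaves tacit (that $\eta\le 1$ is needed for $\eta^{\alpha_i}\le\eta^{\alpha_0}$, and that $\sigma(x)\in B_1^n$ is needed to invoke the \L ojasiewicz bound), which is a modest improvement in rigor.
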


\begin{proof}
By Proposition \ref{prp:dist.value} for any $x\in B_1^n$ we have
$$
c_{n,d} \dist (x, Y_0)^d \le |P(x)| .
$$
Let $\delta' = c_{n,d}\delta^d$ and denote by $\hat Y_0^{\delta'}$ the sublevel set $\{|P(x)|<\delta'\}$. Then, for any point $x \in \hat Y_0^{\delta'}$ we have $|P(x)|<\delta'$ and therefore $\dist (x, Y_0) \le (\frac{\delta'}{c_{n,d}})^{\frac{1}{d}} = \delta$, i.e. $x\in B_1^n \cap Y_0^\delta$. Thus, $B_1^n \cap \hat Y_0^{\delta'} \subset B_1^n \cap Y_0^\delta$. We conclude that the preimage $\sigma^{-1}(Y_0^\delta) \cap W_y$ contains the subset of $W_y$ defined by the inequality
$$
|P\circ \sigma (x)| = |U(x) x^\alpha|<\delta'.
$$
But for any $x\in S^\eta$, by definition of $S^\eta$ and by \eqref{eq:U}, we have
$$
|U(x)\cdot x_1^{\alpha_1}\cdot x_2^{\alpha_2}\cdot\ldots\cdot x_n^{\alpha_n}| \le C_y \eta^{\alpha_0} = \delta' ,
$$
and hence $S^\eta \subset \sigma^{-1}(Y_0^\delta) \cap W_y$.
\end{proof}

Therefore, it is sufficient to construct a doubling covering of $W_y\setminus S^\eta$ in $W_y \setminus Z$, where $Z$, as above, is the union of the coordinate hyperplanes $Z_i = \{x_i = 0\}$ for $i = 1,\ldots,n$. Indeed, for all the charts $\psi_j$ of the doubling covering of $W_y\setminus S^\eta$ in $W_y \setminus Z$, the charts $\sigma \circ \psi_j$ will form a doubling covering of $B_1^n\setminus Y_0^\delta$ in $B_1^n\setminus Y_0$. Note also that it is enough to consider the case where $y = 0$ and $W_y$ is the unit polydisc
$$
Q_n = D_1\times D_1\times\cdots \times D_1.
$$
We denote the complement of $S^\eta$ in $Q_n$ by $Q_n^\eta : = Q_n\setminus S^\eta$.

Now, we need the following ``model'' result:

\begin{prp} \label{prp:doubl.cov.S.eta}
Let $\eta>0$ and let $\gamma\ge 2$. There exists a $\gamma$-doubling covering $\U$ of $Q_n^\eta$ in $\C^n\setminus Z$ with the following properties:

\noindent 1. Each chart $\psi_j$ of $\U$ is an affine mapping of $B_1^n$ to $\C^n\setminus Z$, extendible, as an affine mapping, to $\tilde \psi_j:B_\gamma^n \to \C^n\setminus Z$.

\noindent 2. The complexity $\kappa(\U)$ does not exceed $(9\gamma^n)^n(\log ({9\gamma^n}/{\eta}))^n$. In particular, for $\gamma=2$,
$$
\kappa(\U)\le(9\cdot 2^n)^n(\log ({9\cdot 2^n}/{\eta}))^n.
$$
\end{prp}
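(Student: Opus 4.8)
The plan is to reduce everything to a one-dimensional statement about the punctured disk, applied coordinate by coordinate, and then assemble a covering of $Q_n^\eta$ as a product-like union of affine charts. First I would handle the $n=1$ case: cover $\{z\in\C : \tfrac12\le|z|\le 1,\ |z|\ge\eta\}$ (the relevant slice of $D_1\setminus\{0\}$) by affine charts $\psi(w)=a+rw$ whose $\gamma$-dilates stay inside $\C\setminus\{0\}$, i.e. with $r\gamma<|a|$. Grouping the annulus $\eta\le|z|\le 1$ into dyadic-type subannuli $2^{-k-1}\le|z|\le 2^{-k}$ for $k=0,1,\dots,O(\log(1/\eta))$, each such subannulus is covered by a number of affine charts that is bounded \emph{independently of }$k$ (by conformal/affine rescaling $z\mapsto 2^k z$, every subannulus looks the same, and the covering must only keep the $\gamma$-dilate off the origin — a fixed geometric condition). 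This yields an affine $\gamma$-doubling covering of the one-variable model by at most $A(\gamma)\log(B(\gamma)/\eta)$ charts, for explicit constants $A,B$ depending only on $\gamma$; chasing the constants gives the claimed shape $9\gamma\log(9\gamma/\eta)$ type bound in dimension one.

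Next I would pass to dimension $n$ by taking products. The set $Q_n^\eta = Q_n\setminus S^\eta$ consists of points $x$ with $|x_i|\ge\eta$ for \emph{every} $i$; so if $\{\phi^{(i)}_{j_i}\}$ is the one-variable covering of $\{\eta\le|z|\le 1\}$ in $\C\setminus\{0\}$, the products $\psi_{(j_1,\dots,j_n)}(w_1,\dots,w_n) = (\phi^{(1)}_{j_1}(w_1),\dots,\phi^{(n)}_{j_n}(w_n))$ cover $Q_n^\eta$, each is affine $B_1^n\to\C^n\setminus Z$, and each extends affinely to $B_\gamma^n\to\C^n\setminus Z$ because each coordinate factor does (the $\gamma$-ball in $\C^n$ maps into the product of the $\gamma$-disks, staying off every hyperplane $Z_i$). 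A small care is needed here: I want charts on the round ball $B_1^n$, not on a polydisk, but since $B_1^n\subset\Delta_1^n$ it suffices that the affine map is defined and univalent on the polydisk $\Delta_\gamma^n$ and keeps the image off $Z$, which the product construction gives; restricting to $B_1^n$ (resp.\ $B_\gamma^n$) then loses nothing. The number of charts is the product of the $n$ one-variable counts, hence at most $\bigl(A(\gamma)\log(B(\gamma)/\eta)\bigr)^n$, and substituting the explicit one-variable constants produces $(9\gamma^n)^n(\log(9\gamma^n/\eta))^n$; the factor $\gamma^n$ rather than $\gamma$ appears because to keep a \emph{product} of $\gamma$-dilated disks off all of $Z$ while also covering the polydisk $Q_n$ efficiently one must account for the way the round ball sits inside the polydisk, costing a dimensional factor.

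The main obstacle I expect is bookkeeping the constants so that the bound comes out in exactly the stated closed form $(9\gamma^n)^n(\log(9\gamma^n/\eta))^n$ uniformly in $\gamma\ge2$ and $\eta>0$ — in particular making the ``number of affine charts per dyadic subannulus, independent of the level $k$'' step fully explicit, and checking the boundary levels (the innermost subannulus near $|z|=\eta$ and the possibility that $\eta>1/2$, where the logarithm count is essentially $1$). None of this is conceptually hard: the geometry is elementary affine covering of annuli, the dyadic decomposition contributes the single logarithmic factor in each variable, and taking products contributes the $n$-th power. The only genuinely delicate point is verifying univalence of the affine extensions $\tilde\psi_j$ on a neighborhood of $B_\gamma^n$ together with the requirement that images avoid $Z$, but affine maps with $r\gamma<|a|$ in each coordinate satisfy both by inspection, so this is routine once the one-variable model is set up carefully.
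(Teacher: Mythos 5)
Your one-variable step (dyadic annuli, affine charts $a+rw$ with $r\gamma<|a|$) matches the paper's use of Whitney disks from [FY17, Theorem 2.2], so that part is fine. The gap is in how you pass to dimension $n$. You form products $\psi_{(j_1,\dots,j_n)}=(\phi_{j_1},\dots,\phi_{j_n})$, which are naturally defined on the \emph{polydisk} $\Delta_1^n$, and then assert ``restricting to $B_1^n$ (resp.\ $B_\gamma^n$) then loses nothing.'' That is false for the coverage requirement: a point of $Q_n^\eta$ has the form $(\phi_{j_1}(w_1),\dots,\phi_{j_n}(w_n))$ with each $w_i\in\Delta_1$, but $(w_1,\dots,w_n)$ need not lie in the round ball $B_1^n$, so the images $\psi_j(B_1^n)$ (as opposed to $\psi_j(\Delta_1^n)$) do \emph{not} cover $Q_n^\eta$. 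You later concede there is ``a dimensional factor'' coming from ``the way the round ball sits inside the polydisk,'' but you never supply the mechanism that recovers coverage, and without it the proof does not close.

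What fixes this — and what the paper actually does — is to spend part of the doubling factor on inflating the old coordinates. Their suspension $\Sigma_{\lambda,a,\beta}\psi(x,y)=(\tilde\psi(\beta x),\lambda y+a)$ precomposes with a dilation $x\mapsto \beta x$ in the first $n$ coordinates, so that a point $v=\psi(x)$ with $x\in B_1^n$ is hit already at $x/\beta$, leaving $\sqrt{1-1/\beta^2}$ of room in the ball for the new coordinate $y$. This is precisely what makes a single suspended chart on the ball $B_1^{n+1}$ cover a full slab $\psi(B_1^n)\times D_\nu^a$. Iterating this one coordinate at a time, with $\beta=\gamma$ each time, costs a factor $\gamma$ of the doubling parameter per dimension; hence they start from a $\gamma^n$-doubling covering in dimension one and arrive at a $\gamma$-doubling covering in dimension $n$, which is exactly where the $\gamma^n$ inside the $\log$ in the bound comes from. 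Your ``direct product'' with no rescaling and the claim that restricting to balls loses nothing skips this entirely. If you want to keep a product-style presentation, you would need to explicitly shrink the ball domain used for coverage by an explicit factor (e.g.\ $1/\sqrt{n}$ so that $\Delta_{1/\sqrt n}^n\subset B_1^n$), enlarge the one-dimensional covering accordingly, and verify the $\gamma$-extendibility on the (unshrunk) ball $B_\gamma^n$ — in other words, re-derive the bookkeeping the suspension lemma encodes. As written, the ``loses nothing'' step is the genuine gap.
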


Before proving this proposition, let us first conclude the proof of Theorem \ref{thm-complement}. In order to prove the theorem, we chose a certain finite covering of $\sigma^{-1}(Y_0)$ by the neighborhoods $W_y$, provided by Theorem \ref{Res.Sing}. Then, we apply Proposition \ref{prp:doubl.cov.S.eta} with $\gamma=2$ to each of these neighborhoods $W_y$ separately. However, first we have to normalize $W_y$ to the standard form $Q_n = D_1\times D_1\times\cdots \times D_1$, used in Proposition \ref{prp:doubl.cov.S.eta}. Next, in each $W_y$ we apply Corollary \ref{cor:monomial1}, in order to find the appropriate $\eta$. In these steps the parameter $\delta$ is scaled accordingly. As a result, $\delta$ enters the bound in Theorem \ref{thm-complement} with a coefficient $c_1$, depending on the geometry of the resolution $\sigma$ of Theorem \ref{Res.Sing}, in contrast with Proposition \ref{prp:doubl.cov.S.eta}, where the coefficients are absolute and explicit. The same concerns the coefficient $C_1$, which is obtained by summing the corresponding coefficients over the neighborhoods $W_y$. This completes the proof of Theorem \ref{thm-complement}. $\square$

First, let us sketch the proof of Proposition \ref{prp:doubl.cov.S.eta}. It is done by induction on the dimension. In dimension $n = 1$ the result is a partial case of \cite[Theorem 2.2]{FY17} (see also \cite[Example 2]{FY}), the required covering of $D_1\setminus D_\eta$ in $D_1\setminus \{0\}$ consists of the ``Whitney's disks'', accumulating to the origin. Assume that the required covering $\U$ has been constructed in dimension $n$. We produce the required covering in dimension $n+1$. To achieve this extension we introduce a ``suspension'' construction, extending an $n$-dimensional chart into an $(n+1)$-dimensional one. This name (suspension), in a pretty similar meaning, is traditionally used in algebraic and homotopic topology.

What follows is a definition of a suspension, then we return to the proof of Proposition \ref{prp:doubl.cov.S.eta} below. We assume that $\gamma>1$ is fixed, and have three free parameters: $a \in \C$, $\lambda, \beta \in \R_+$ with $1<\beta < \gamma$. The parameter $\lambda$ defines the ``height'' of the suspension, while $a$ defines its ``vertical'' (in $\C$) shift. The third parameter $\beta$ controls a ``thickening'' of the suspensions, which is necessary to ``suspend'' coverings.

\begin{defin}
Let $Y$ be a complex $n$-dimensional manifold and let $\psi: B_1^n\to Y$ be a $\gamma$-doubling chart. Let $a \in \C$, $\lambda, \beta \in \R_+$ with $1<\beta < \gamma$ be given. The $(\lambda, a, \beta)$-suspension $\Sigma_{\lambda, a, \beta} \psi$ of $\psi$ is an analytic mapping
$$
\Sigma_{\lambda, a,\beta} \psi: B_1^{n+1}\to Y\times \C
$$
defined, for $(x,y)\in B_1^{n+1}$ with $x\in B_1^n$ and $y\in \C$, by
\be \label{eq:suspension}
\Sigma_{\lambda, a, \beta} \psi(x,y) = (\tilde \psi(\beta x), \lambda y+a)\in Y\times \C
\ee
where $\tilde \psi$ is the analytic extension of $\psi$ to $B_\gamma^n$.
\end{defin}

In the following lemma, we summarize some simple properties of the suspension construction.

\begin{lem} \label{prp:doubl.cov.susp}
Let $Y$ be a complex $n$-dimensional manifold and let $\psi: B_1^n\to Y$ be a $\gamma$-doubling chart. Let $a \in \C$, $\lambda, \beta \in \R_+$ with $1<\beta < \gamma$. Then, the $(\lambda, a, \beta)$-suspension $\Sigma_{\lambda, a, \beta} \psi$ of $\psi$ is a $\theta$-doubling chart in $Y\times \C$ with $\theta = {\gamma}/{\beta}$.

Moreover, if $\U$ is a $\gamma$-doubling covering of a compact $G\subset Y$, then the collection of the suspended charts $\Sigma \U = \{\Sigma_{\lambda, a, \beta} \psi : \psi \in \U\}$ forms a $\theta$-doubling covering of $G\times D_\nu^a \subset Y\times \C$, with $D_\nu^a$ a disk of radius $\nu$ centered at $a\in \C$, and $\nu = \lambda \sqrt{1-\frac{1}{\beta^2}}$.
\end{lem}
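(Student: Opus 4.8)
The plan is to verify the two assertions of Lemma \ref{prp:doubl.cov.susp} directly from the definition \eqref{eq:suspension} of the suspension, keeping careful track of the polyradii of balls. First I would check that $\Sigma_{\lambda,a,\beta}\psi$ is a $\theta$-doubling chart. Univalence on a neighborhood of $B_1^{n+1}$ is inherited coordinatewise: in the $Y$-factor the map $x\mapsto\tilde\psi(\beta x)$ is univalent wherever $\tilde\psi$ is, i.e. for $\|x\|<\gamma/\beta$, and in the $\C$-factor the affine map $y\mapsto\lambda y+a$ is univalent everywhere. So $\Sigma_{\lambda,a,\beta}\psi$ extends to a univalent map on $B_{\theta}^{n+1}$ with $\theta=\gamma/\beta$; since $1<\beta<\gamma$ we indeed have $\theta>1$, as required for a doubling chart. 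One subtlety worth a line: the domain of $\Sigma_{\lambda,a,\beta}\psi$ must be $B_1^{n+1}$, and for $(x,y)\in B_1^{n+1}$ one has $\|x\|<1<\gamma/\beta$, so the formula $\tilde\psi(\beta x)$ makes sense and agrees with a genuine chart; its restriction to $B_1^{n+1}$ is the chart $\psi_{\Sigma}$, while the extension to $B_\theta^{n+1}$ witnesses the $\theta$-doubling property.

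Next I would prove the covering statement. Suppose $\U$ is a $\gamma$-doubling covering of a compact $G\subset Y$, so the charts $\psi(B_1^n)$, $\psi\in\U$, cover $\bar G=G$. I claim $\{\Sigma_{\lambda,a,\beta}\psi:\psi\in\U\}$ covers $G\times D_\nu^a$ with $\nu=\lambda\sqrt{1-1/\beta^2}$. Take $(p,w)\in G\times D_\nu^a$. Choose $\psi\in\U$ with $p\in\psi(B_1^n)$, say $p=\tilde\psi(x_0)$ with $\|x_0\|<1$ (using $\tilde\psi|_{B_1^n}=\psi$). We must find $(x,y)\in B_1^{n+1}$ with $\beta x=x_0$ and $\lambda y+a=w$; this forces $x=x_0/\beta$ and $y=(w-a)/\lambda$. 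Then $\|x\|^2+|y|^2=\|x_0\|^2/\beta^2+|w-a|^2/\lambda^2<1/\beta^2+\nu^2/\lambda^2=1/\beta^2+(1-1/\beta^2)=1$, so indeed $(x,y)\in B_1^{n+1}$ and $\Sigma_{\lambda,a,\beta}\psi(x,y)=(p,w)$. Hence the images of the suspended charts over $B_1^{n+1}$ cover $G\times D_\nu^a$, and by the first part each is a $\theta$-doubling chart in $Y\times\C$, so $\Sigma\U$ is a $\theta$-doubling covering of $G\times D_\nu^a$.

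I do not expect a genuine obstacle here; the statement is essentially a bookkeeping exercise about Euclidean balls versus products of balls and disks. The one point that requires the slightest care is the choice of the constant $\nu$: the bound $\|x_0\|^2/\beta^2+|w-a|^2/\lambda^2<1$ is exactly what pins down $\nu=\lambda\sqrt{1-1/\beta^2}$, and using the open ball $B_1^{n+1}$ (so strict inequalities) is what makes the argument go through at the boundary case $\|x_0\|\to 1$; the relative compactness of $G$ and closedness of $\bar G$ in the definition of doubling covering ensure we only need to cover $G$ itself, for which $\|x_0\|<1$ suffices. A remark that the $\beta\to\gamma^-$ limit degrades $\theta\to 1$ while $\beta\to 1^+$ degrades $\nu\to 0$ explains why $\beta$ is kept as a free "thickening" parameter strictly between $1$ and $\gamma$, but this is commentary rather than part of the proof.
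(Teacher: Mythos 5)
Your proof is correct and follows essentially the same route as the paper's: extend the suspension to $B_\theta^{n+1}$ via the same formula using $\beta<\gamma$, then for $(p,w)\in G\times D_\nu^a$ solve $\beta x = x_0$, $\lambda y + a = w$, and verify $\|x_0/\beta\|^2 + |(w-a)/\lambda|^2 \le 1/\beta^2 + \nu^2/\lambda^2 = 1$. You add a bit of explicit bookkeeping on univalence and strict versus non-strict inequalities that the paper leaves implicit, but there is no substantive difference.
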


\begin{proof}
The suspension $\Sigma_{\lambda, a, \beta} \psi$ of $\psi$ is extendible to the concentric ball $B_\theta^{n+1}$ by the same expression \eqref{eq:suspension}. Indeed, since by assumptions, $\beta < \gamma$, for any $x\in B_\theta^n$ we have $\beta x \in B_\gamma^n$ and hence $\tilde \psi(\beta x)$ is well defined and belongs to $Y$. Hence, $\Sigma_{\lambda, a, \beta} \psi$ is a $\theta$-doubling chart in $Y\times \C$.

Now, let $\U$ be a $\gamma$-doubling covering of a compact $G\subset Y$. In order to prove that $\Sigma \U$ forms a $\theta$-doubling covering of $G\times D_\nu^a \subset Y\times \C$, consider a point
$(v,\omega)\in G\times D_\nu^a$. Since $\U$ is a covering of $G$, we have $v=\psi(x)$, for certain $\psi \in \U$ and $x\in B^n_1$. Therefore, by \eqref{eq:suspension} we get
\begin{align*}
\Sigma_{\lambda, a, \beta} \psi(\frac{x}{\beta},y) = (\tilde \psi(\beta x/\beta), \lambda y+a) = (\psi( x), \lambda y+a) = (v, w) .
\end{align*}
We need to check that $(x/\beta,y)\in B_1^{n+1}$. Indeed, $w\in D_\nu^a$ and so $\lambda y\in D_0^a$ and thus $|y|^2\le (\nu/\lambda)^2$. Hence, by our choice of $\nu$, we get
$$
\|x/\beta\|^2+|y|^2 \le 1/\beta^2 + (\nu/\beta)^2 \le 1 ,
$$
which completes the proof of Lemma \ref{prp:doubl.cov.susp}.
\end{proof}

Now, we come to a general statement, concerning the coverings with suspensions. For a given $\mu>1$, starting with a $\mu$-doubling covering $\U$ of a compact $G\subset Y$, we want to cover the compact set $G \times \{D_1 \setminus D_\delta\}$ in $Y \times \{\C \setminus \{0\}\}$.

\begin{prp} \label{prp:doubl.cov.susp2}
Let $1<\beta < \mu$ be given, and put $\theta = \frac{\mu}{\beta}> 1$. Let $Y$ be a complex $n$-dimensional manifold and let $\U$ be a $\mu$-doubling covering of a compact $G\subset Y$. Then, for any $\delta>0$ there exists a $\theta$-doubling covering $\tilde \U$ of $G \times \{D_1 \setminus D_\delta\}$ in $Y \times \{\C \setminus \{0\}\}$ with
$$
\kappa(\tilde \U) \le 3\zeta \log (\frac{3\zeta}{\delta}) \kappa(\U)
$$
where $\zeta =\frac{2\mu}{\beta}\left [1-\frac{1}{\beta^2} \right ]^{-\frac{1}{2}}$.
\end{prp}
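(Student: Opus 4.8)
The plan is to build $\tilde\U$ entirely from suspensions of the charts of $\U$, which turns the statement into a purely one--dimensional covering problem in the $\C$--factor.

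Fix the given $\beta$ with $1<\beta<\mu$, so $\theta=\mu/\beta$. For a parameter pair $(\lambda,a)\in\R_+\times\C$, Lemma~\ref{prp:doubl.cov.susp} applied with its $\gamma$ equal to $\mu$ says that $\Sigma_{\lambda,a,\beta}\U:=\{\Sigma_{\lambda,a,\beta}\psi:\psi\in\U\}$ is a $\theta$--doubling covering of $G\times D_\nu^a$ inside $Y\times\C$, with $\nu=\lambda\sqrt{1-1/\beta^2}$. The only extra thing to check is that these suspended charts, together with their $\theta$--extensions, stay inside $Y\times(\C\setminus\{0\})$ and not merely inside $Y\times\C$. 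But the $\theta$--extension of $\Sigma_{\lambda,a,\beta}\psi$ has $\C$--coordinate $\lambda y+a$ with $|y|<\theta$, so its image in $\C$ is the disk $D_{\theta\lambda}^a$, and this disk misses the origin precisely when $|a|>\theta\lambda$. Since $\lambda=\nu/\sqrt{1-1/\beta^2}$ and, by the definition of $\zeta$, one has $\theta/\sqrt{1-1/\beta^2}=\zeta/2$, the admissibility condition on $(\lambda,a)$ is simply
\[
|a|>\tfrac{\zeta}{2}\,\nu ,\qquad \nu=\lambda\sqrt{1-1/\beta^2}.
\]
For every admissible pair, $\Sigma_{\lambda,a,\beta}\U$ is an honest $\theta$--doubling covering of $G\times D_\nu^a$ in $Y\times(\C\setminus\{0\})$.

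So it suffices to produce admissible pairs $(\lambda_1,a_1),\dots,(\lambda_M,a_M)$ for which the disks $D_{\nu_j}^{a_j}$ cover the annulus $\Delta_1\setminus\Delta_\delta$; then
\[
\tilde\U:=\bigl\{\Sigma_{\lambda_j,a_j,\beta}\psi:\ \psi\in\U,\ 1\le j\le M\bigr\}
\]
is a $\theta$--doubling covering of $G\times(\Delta_1\setminus\Delta_\delta)$ in $Y\times(\C\setminus\{0\})$ with $\kappa(\tilde\U)=M\kappa(\U)$, and the proposition reduces to the planar claim that $\Delta_1\setminus\Delta_\delta$ can be covered by $M\le 3\zeta\log(3\zeta/\delta)$ disks, each centred at distance more than $\tfrac{\zeta}{2}$ times its radius from the origin.

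For the planar claim I would use a Whitney--type ladder marching towards the origin: take a decreasing sequence of radii $1=r_0>r_1>\dots>r_N$ with $r_N\le\delta<r_{N-1}$, and on the $k$--th annular layer $\{r_{k+1}\le|z|\le r_k\}$ place a ring of equally spaced congruent disks whose centres lie on a concentric circle of a suitable intermediate radius $m_k\asymp r_k$ and whose common radius $\nu_k$ is chosen just large enough to span the layer radially. An elementary computation (comparing $|z-m_ke^{i\phi}|$ to $\nu_k$, i.e. an $\arccos$ estimate) gives both the constraint $\nu_k$ must satisfy relative to $r_k-r_{k+1}$ and the number of equally spaced disks of radius $\nu_k$ needed on the circle $|z|=m_k$; the geometric ratio $r_{k+1}/r_k$ and the gauge $\nu_k/m_k$ are then taken as large as the admissibility constraint $m_k>\tfrac{\zeta}{2}\nu_k$ allows, which fixes $N$ and the per--layer count, and summing the product over the layers yields $M\le 3\zeta\log(3\zeta/\delta)$ after absorbing the contributions of the two boundary layers near $|z|=1$ and $|z|=\delta$ and using that the disks are permitted to protrude slightly past $\Delta_1$.

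The suspension bookkeeping of the first two paragraphs is routine; the real work, and the main obstacle, is the quantitatively sharp planar construction. One has to reconcile three competing demands at once -- radial coverage of each layer, angular coverage of each ring, and the origin--avoidance inequality $|a|>\tfrac{\zeta}{2}\nu$ -- and organise the ladder so that the product of the number of layers and the number of disks per layer stays within the stated bound, with every constant explicit and depending only on $\zeta$ (hence, through $\mu$ and $\beta$, only on $n$ and $d$ in the applications).
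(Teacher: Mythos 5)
Your reduction to a one--dimensional covering problem in the $\C$--factor is exactly the paper's strategy: build $\tilde\U$ as a union of suspension layers $\Sigma_{\lambda_j,a_j,\beta}\U$, use Lemma~\ref{prp:doubl.cov.susp} to see that each layer $\theta$--doubly covers $G\times D_{\nu_j}^{a_j}$ with $\nu_j=\lambda_j\sqrt{1-1/\beta^2}$, and note that the $\theta$--extension of the suspension projects in $\C$ to the disk $D^{a_j}_{\theta\lambda_j}$, so the layer avoids $Y\times\{0\}$ precisely when $|a_j|>\theta\lambda_j$. Your identity $\theta/\sqrt{1-1/\beta^2}=\zeta/2$ and the resulting admissibility condition $|a|>(\zeta/2)\nu$ are also correct, and match what the paper needs (the paper in fact arranges the stronger bound $\theta\lambda_j\le |a_j|/2$).

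The genuine gap is in the final planar claim, and you yourself flag it as ``the real work, and the main obstacle.'' You need a covering of $D_1\setminus D_\delta$ by at most $3\zeta\log(3\zeta/\delta)$ disks whose centres and radii satisfy the origin--avoidance inequality, and you only sketch a ladder--and--rings construction with the relevant estimates (the radial/angular trade-off, the geometric ratio of the $r_k$'s, the per--ring disk count, and the summation) left unverified. In the paper this step is not re-derived at all: the disks $\tilde D_j$ are taken to be the ``Whitney disks'' supplied by Theorem~2.2 of \cite{FY17}, which already gives a $\zeta$--covering of $D_1\setminus D_\delta$ in $\C\setminus\{0\}$ consisting of exactly $N=3\zeta\log(3\zeta/\delta)$ disks, and the $\zeta$--covering property $r_j<|a_j|/\zeta$ then immediately implies your admissibility condition (with room to spare). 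So the missing piece is not a missing idea but a missing citation/derivation; as written, your proof does not establish the stated quantitative bound because the key one--dimensional covering is only gestured at. Setting $\lambda_j=r_j\bigl[1-1/\beta^2\bigr]^{-1/2}$ so that $\nu_j=r_j$ and invoking \cite[Theorem~2.2]{FY17} closes the gap and recovers exactly the count $\kappa(\tilde\U)\le N\,\kappa(\U)=3\zeta\log(3\zeta/\delta)\,\kappa(\U)$.
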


\begin{proof}
We construct the required covering $\tilde \U$ as the union of the suspensions $\Sigma_{\lambda_j, a_j, \beta}\U$ of $\U$ over $j = 1,2,\ldots,N$, where $N, \lambda_j, a_j$ are determined below. Thus, $\tilde \U$ is the union of ``layers'', each layer being ``vertically'' (in the direction of the factor $\C$ in $Y\times \C$) shifted and properly rescaled suspension of $\U$. The ``widths'' of $\Sigma_j \U$ decreases exponentially in $j$ and thus we need an order of $\log (\frac{1}{\delta})$ such layers to cover $G \times \{D_1 \setminus D_\delta\}$ (see e.g. a similar construction in \cite[Example 2]{FY}).

More accurately, let us fix $\zeta =2\theta \left [1-\frac{1}{\beta^2} \right ]^{-\frac{1}{2}}=\frac{2\mu}{\beta}\left [1-\frac{1}{\beta^2} \right ]^{-\frac{1}{2}}$, and let
$$
\hat \U =\{\tilde D_j : j=1,\ldots,N\}, \ \text {with} \ N=3\zeta \log (\frac{3\zeta}{\delta}),
$$
be a $\zeta$-covering of $D_1 \setminus D_\delta$ in $\C \setminus \{0\}$, where $\tilde D_j$ are the ``Whitney disks'', provided by Theorem 2.2 of \cite{FY17}.

For $j=1,\ldots,N$ denote by $a_j$ and $r_j$ the center and the radius of the disk $\tilde D_j\in \hat \U$, respectively, and put $\lambda_j=r_j\left [1-\frac{1}{\beta^2} \right ]^{-\frac{1}{2}}$.

We claim that the suspensions $\Sigma_{\lambda_j, a_j, \beta}\psi$ for all $\psi \in \U$, form the required covering. Indeed, by Lemma \ref{prp:doubl.cov.susp}, the collection of the suspended charts $\Sigma_j:=\{\Sigma_{\lambda_j, a_j, \beta} \psi : \psi \in \U\}$ forms a $\theta$-doubling covering of $G\times D_\nu^{a_j} \subset Y\times \C$, with $\nu = \lambda_j \left [1-(\frac{1}{\beta})^2\right ]^{\frac{1}{2}}=r_j$. Thus, $\Sigma_j$ covers $G\times D_{r_j}^{a_j}=D_j$. Since $\tilde D_j$ cover $D_1 \setminus D_\delta$, we conclude that $\tilde \U=\cup_j\Sigma_j$ covers $G \times \{D_1 \setminus D_\delta\}$.

It remains to show that the suspended charts do not touch the zero section $Y\times \{0\}$. Since the disks $D_j$ form a $\zeta$-covering of $D_1 \setminus D_\delta$ in $\C \setminus \{0\}$, for any $j$ we have $r_j < \frac{|a_j|}{\zeta}$ (this is the $\zeta$ doubling condition).

On the other hand, for any $j$ and $\psi \in \U$ consider the projection of the image of the suspension $\Sigma_{\lambda_j, a_j, \beta} \psi$ in $Y \times \C$ to $\C$. By the expression (\ref{eq:suspension}) for $\psi$, this projection is the disk of radius $\lambda_j$ in $\C$, centered at $a_j$. Since $\psi$ and its suspensions are affine mappings, for the $\theta$-extension the image is the disk of radius
$$
\theta\lambda_j=\theta r_j\left [1-\frac{1}{\beta^2} \right ]^{-\frac{1}{2}}\le \theta \frac{|a_j|}{\zeta}\left [1-\frac{1}{\beta^2} \right ]^{-\frac{1}{2}}=\frac{1}{2}|a_j|,
$$
and hence this disk does not touch $0\in \C$. This completes the proof of Proposition \ref{prp:doubl.cov.susp2}.
\end{proof}

\begin{proof} [Proof of Proposition \ref{prp:doubl.cov.S.eta}]
We have to construct, for $\gamma\ge 2$, a $\gamma$ covering $\U$ of $Q^\eta_n$ with the complexity $\kappa(\U)$ at most $(9\gamma^n)^n(\log ({9\gamma^n}/{\eta}))^n$. We fix $n$, and proceed by induction on the dimension $l$ of $Q^\eta_l$: for $l=1,2,\ldots,n$ we show the existence of a $\gamma^{n-l+1}$-covering $\U_l$ of $Q^\eta_l$ with $\kappa(\U_l)\le (9\gamma^n)^l(\log ({9\gamma^n}/{\eta}))^l$.

In dimension $l = 1$ the result is a partial case of \cite[Theorem 2.2]{FY17}: for any $\zeta >1$ the required $\zeta$-covering of $D_1\setminus D_\eta$ in $D_1\setminus \{0\}$ consists of $3\zeta\log(3\zeta/\eta)$ ``Whitney's disks'', accumulating to the origin. To start the induction, we put in this theorem $\zeta = \gamma^n$, and get a $\gamma^n$-covering of $D_1\setminus D_\eta$ in $D_1\setminus \{0\}$ consisting of less than $9\gamma^n\log(9\gamma^n/\eta)$ Whitney's disks.

Assume that the required $\gamma^{n-l+1}$-covering $\U$ of $Q_l^\eta$ has been constructed in dimension $l$ with $\kappa(\U) \le (9\gamma^n)^l(\log ({9\gamma^n}/{\eta}))^l$. We produce the required covering of $Q_{l+1}^\eta$ in dimension $l+1$, using the fact that $Q_{l+1}^\eta = Q_l^\eta \times \{D_1\setminus D_\eta\}$. We use the suspension construction, as developed above, and apply Proposition \ref{prp:doubl.cov.susp2} with $Y=\C^l\setminus Z_l$, $G=Q_l^\eta$, $\delta=\eta$, $\mu=\gamma^{n-l+1}$, and $\beta = \gamma$. Thus, $\theta=\frac{\mu}{\beta}=\gamma^{n-l}$, and we obtain a $\gamma^{n-l}$-covering $\U_{l+1}$ of $Q_{l+1}^\eta = Q_l^\eta \times \{D_1\setminus D_\eta\}$ in $\C^{l+1}\setminus Z_{l+1}$ with the complexity $\kappa(\U_{l+1})$ not exceeding $3\zeta \log(\frac{3\zeta}{\eta}) \kappa(\U_{l})$, where
$$
\zeta =\frac{2\mu}{\beta}\left [1-\frac{1}{\beta^2} \right ]^{-\frac{1}{2}}=2\gamma^{n-l}\left [1-\frac{1}{\beta^2} \right ]^{-\frac{1}{2}}\le 2\gamma^{n-l}(\frac{3}{4})^{-\frac{1}{2}}\le 3\gamma^{n-l},
$$
since by assumptions $\beta=\gamma\ge 2$. Therefore, we have
$$
\kappa(\U_{l+1})\le 9\gamma^{n-l}\log(\frac{9\gamma^{n-l}}{\eta}) \kappa(\U_{l})\le 9\gamma^{n-l}\log(\frac{9\gamma^{n-l}}{\eta})(9\gamma^n)^l(\log ({9\gamma^n}/{\eta}))^l\le
$$
$$
\le (9\gamma^n)^{l+1}(\log ({9\gamma^n}/{\eta}))^{l+1}.
$$
This completes the induction step. For $l=n$ we get a $\gamma^{n-n+1}=\gamma$-covering $\U=\U_n$ of $Q_n^\eta$ with the complexity satisfying
$$
\kappa (\U)=\kappa (\U_n)\le (9\gamma^n)^n(\log ({9\gamma^n}/{\eta}))^n,
$$
thus completing the proof of Proposition \ref{prp:doubl.cov.S.eta}.
\end{proof}

\subsection{Proof of Theorem \ref{thm-regular}} \label{sec:cov.hypersurface}

As it was explained above, in order to prove our second main result it is sufficient to construct a required doubling covering ``locally'', in each coordinate neighborhood $W_y$ provided by Theorem \ref{Res.Sing}. As above, we assume that in local coordinates $x_1,\ldots,x_n$ the neighborhood $W_y$ is defined by $|x_i|\le 1$, $i=1,\ldots,n$, while the polynomial $P\circ\sigma$ takes a form

\be \label{eq:monomial5}
P\circ\sigma(x)=U(x) x^\alpha , \text \ {with} \ \frac{1}{C_y}\le |U(x)|\le C_y, \ x \in W_y.
\ee
Let $\tilde W_y$ be defined by $|x_1|\le \frac{1}{4}$, $|x_i|\le 1$, $i=2,\ldots,n$. We will produce, for a regular value $c>0$, a doubling covering of $Y_c\cap \tilde W_y$, with $Y_c$ defined in $W_y$ by the equation $P\circ\sigma(x)=c$.

We present the hypersurface $Y_c$ as the graph $x_1 = g(x_2,\ldots,x_n)$ over $Q_{n-1}^\eta$, for an appropriate $\eta>0$. The function $g$ is a multivalued (more accurately, $\alpha_1$-valued) function and we show that all its branches are regular. Finally, we use Proposition \ref{prp:doubl.cov.S.eta} to construct a doubling covering $\U$ of $Q_{n-1}^\eta$ and hence, of $\Omega$ and extend it to the required covering of $Y_c$, composing the charts in $\U$ with $g$.

Now, we present the proof in detail.

\begin{lem}
Let $\alpha_0 = \min_{i = 1}^n \alpha_i \ge 1$. Let $x = (x_1,x_2,\ldots,x_n)\in Y_c \cap W_y$. Then, for any $1\le j \le n$, we have
$$
|x_j| \ge (\frac{|c|}{C_y})^{\frac{1}{\alpha_0}} .
$$
In particular, the projection $\Omega$ of $Y_c \cap W_y$ onto the subspace $\C^{n-1}$ of the points $\bar x = (x_2,\ldots,x_n)$ in $\C^n$ is contained in $Q_{n-1}^\eta$, for $\eta = (\frac{|c|}{C_y})^{\frac{1}{\alpha_0}}$.
\end{lem}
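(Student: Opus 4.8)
The plan is to read the bound off the defining equation of $Y_c$ together with the monomial normal form \eqref{eq:monomial5} and the fact that $W_y$ is the unit polydisc $\{|x_i|\le 1\}$. First I would observe that a point $x\in Y_c\cap W_y$ satisfies $U(x)\,x^\alpha=c$; taking absolute values and using the bound $|U(x)|\le C_y$ from \eqref{eq:monomial5} gives
$$
\prod_{i=1}^n |x_i|^{\alpha_i}=|x^\alpha|=\frac{|c|}{|U(x)|}\ge \frac{|c|}{C_y}.
$$

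Next I would exploit $|x_i|\le 1$ for every $i$ on $W_y$: fixing $j$, all factors with $i\ne j$ are at most $1$, so $|x_j|^{\alpha_j}\ge\prod_{i=1}^n|x_i|^{\alpha_i}\ge |c|/C_y$, and taking $\alpha_j$-th roots yields $|x_j|\ge(|c|/C_y)^{1/\alpha_j}$. Since $c$ is small and $C_y\ge 1$, we may assume $|c|\le C_y$, so $|c|/C_y\in(0,1]$ and raising it to the smaller exponent $1/\alpha_j\le 1/\alpha_0$ only increases it; hence $|x_j|\ge(|c|/C_y)^{1/\alpha_0}=\eta$, which is the claimed inequality. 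For the ``in particular'' clause I would just note that if $\bar x=(x_2,\dots,x_n)\in\Omega$ is the projection of some $x=(x_1,\dots,x_n)\in Y_c\cap W_y$, then by the above $\eta\le|x_j|\le 1$ for $j=2,\dots,n$, which is exactly the condition defining $Q_{n-1}^\eta=Q_{n-1}\setminus S^\eta$; thus $\Omega\subset Q_{n-1}^\eta$.

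I do not expect any real obstacle here: this is a one-line \L ojasiewicz-type estimate specialised to monomials, in the same spirit as Corollary \ref{cor:monomial1}. The only mildly delicate point is the monotonicity step $t^{1/\alpha_j}\ge t^{1/\alpha_0}$ for $t\in(0,1]$, which requires $|c|/C_y\le 1$; I would neutralise this by recording the standing assumption that $|c|$ is small enough (shrinking $\kappa_2$ if necessary) that $|c|\le C_y$, which is consistent with the setup already in force.
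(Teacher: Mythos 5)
Your proof is correct and follows essentially the same route as the paper: take absolute values in $U(x)x^\alpha=c$, use $|U|\le C_y$ and $|x_i|\le 1$ to isolate $|x_j|^{\alpha_j}\ge |c|/C_y$, then pass from the exponent $1/\alpha_j$ to $1/\alpha_0$ by monotonicity on $(0,1]$. The only addition you make is to record explicitly that the monotonicity step needs $|c|/C_y\le 1$; this is indeed silently used in the paper, and in fact it is automatic here (if $Y_c\cap W_y\ne\emptyset$ then $|c|=|U(x)x^\alpha|\le C_y$ by the very bounds you already quoted), so you could replace the appeal to ``$c$ small'' by this observation and avoid shrinking $\kappa_2$.
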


\begin{proof}
We have
$$
|x_j|^{\alpha_j} = \frac{|c|}{|U(x)| \prod_{i\ne j} |x_i|^{\alpha_i}} \ge \frac{|c|}{C_y},
$$
since, by assumptions, for any $l$ we have $|x_l| \le 1$, and $\frac{1}{C_y}\le |U(x)|\le C_y$, $x \in W_y$. Therefore
$$
|x_j| \ge (\frac{|c|}{C_y})^{\frac{1}{\alpha_j}} \ge (\frac{|c|}{C_y})^{\frac{1}{\alpha_0}}=\eta.
$$
\end{proof}

Next we use Proposition \ref{prp:doubl.cov.S.eta} to construct a $2$-doubling covering $\U$ of $Q_{n-1}^\eta$. In order to extend $\U$ to the required covering $\bar \U$ of $Y_c$, we show, using the implicit function theorem, that the equation \eqref{eq:monomial5} of the hypersurface $Y_c$ locally defines each branch of $x_1 = g(x_2,\ldots,x_n)$ as a regular function. Then, we compose the charts in $\U$ with $g$, in order to get the charts of $\bar \U$.

Let us fix $\bar x = (x_2,\ldots,x_n)\in \Omega$, and consider a function of one variable $v(x_1): = U(x_1,\bar x)x_1^{\alpha_1}\bar x^{\bar \alpha}$, for $\bar \alpha = (\alpha_2,\ldots,\alpha_n)$.

\begin{lem} \label{lem:deriv.g}
For $0<|x_1|\le \frac{1}{4}$ we have $\frac{\partial}{\partial x_1}(U(x_1,\bar x)x_1^{\alpha_1}\bar x^{\bar \alpha}) = \frac{d}{dx_1}v(x_1)\ne 0$.
\end{lem}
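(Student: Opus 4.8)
The plan is to compute $\frac{d}{dx_1}v(x_1)$ directly and show the dominant term comes from differentiating $x_1^{\alpha_1}$, so that on the small disk $|x_1|\le\frac14$ the other contributions cannot cancel it. First I would write $v(x_1)=U(x_1,\bar x)\,x_1^{\alpha_1}\,\bar x^{\bar\alpha}$ and apply the product rule to get
$$
\frac{d}{dx_1}v(x_1)=\bar x^{\bar\alpha}\,x_1^{\alpha_1-1}\Bigl(\alpha_1 U(x_1,\bar x)+x_1\frac{\partial U}{\partial x_1}(x_1,\bar x)\Bigr).
$$
Since $\bar x^{\bar\alpha}\ne 0$ (because $\bar x\in\Omega$ forces each $|x_j|\ge\eta>0$ by the previous lemma) and $x_1^{\alpha_1-1}\ne0$ for $0<|x_1|\le\frac14$, it suffices to show the factor $\alpha_1 U+x_1\,\partial_{x_1}U$ is nonzero on that disk.

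For this I would use that $U$ is holomorphic and non-vanishing on $W_y$ with $\frac1{C_y}\le|U|\le C_y$, and exploit a Cauchy-estimate bound on $\partial_{x_1}U$: since $U$ is defined on the unit polydisc and $|U|\le C_y$ there, on the sub-polydisc $|x_1|\le\frac14$ one has $|\partial_{x_1}U(x_1,\bar x)|\le \frac{C_y}{1-1/4}=\frac{4C_y}{3}$ (bounding the Cauchy integral over the circle $|x_1|=1$, or a slightly smaller radius, against the distance to it). Hence for $|x_1|\le\frac14$,
$$
\bigl|x_1\,\partial_{x_1}U(x_1,\bar x)\bigr|\le \tfrac14\cdot\tfrac{4C_y}{3}=\tfrac{C_y}{3},
$$
while $|\alpha_1 U(x_1,\bar x)|\ge \alpha_1/C_y\ge 1/C_y$. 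This is not yet a contradiction as stated, so the bound on $W_y$ needs to be refined: the argument works cleanly if one shrinks $W_y$ at the resolution stage so that $C_y$ is close enough to $1$, or, more robustly, if one notes that after rescaling the $x_1$-variable (replacing the polydisc radius $1$ by a possibly smaller constant in the normalization of $W_y$) the Cauchy estimate gives $|x_1\,\partial_{x_1}U|\le \varepsilon$ for any prescribed $\varepsilon$ on $|x_1|\le\frac14$. Then $|\alpha_1 U+x_1\partial_{x_1}U|\ge 1/C_y-\varepsilon>0$, so $\frac{d}{dx_1}v(x_1)\ne0$ as claimed.

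The main obstacle is precisely this quantitative control: one must ensure the "error" term $x_1\,\partial_{x_1}U$ is genuinely small compared to $\alpha_1 U$ on the disk $|x_1|\le\frac14$, which is not automatic from $\frac1{C_y}\le|U|\le C_y$ alone but follows once the coordinate neighborhood $W_y$ is chosen small enough in the $x_1$-direction — a harmless adjustment since Theorem~\ref{Res.Sing} only asserts existence of \emph{some} such neighborhood, and shrinking it only improves all the constants. Everything else is a routine application of the product rule, the Cauchy estimates for derivatives of holomorphic functions, and the lower bound $|\bar x^{\bar\alpha}|\ge\eta^{|\bar\alpha|}>0$ coming from the preceding lemma.
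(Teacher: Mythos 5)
Your proposal follows essentially the same route as the paper's proof: apply the product rule, factor out $x_1^{\alpha_1-1}\bar x^{\bar\alpha}$, and reduce to the non-vanishing of $\alpha_1 U + x_1\,\partial_{x_1}U$ by combining the two-sided bound on $|U|$ with a Cauchy estimate on $\partial_{x_1}U$. You are, however, more careful than the paper about the constants, and the concern you raise is real. With the normalization $\tfrac{1}{C_y}\le |U|\le C_y$ from \eqref{eq:U} and \eqref{eq:monomial5}, the Cauchy estimate gives $|x_1\,\partial_{x_1}U|\lesssim C_y$ on $|x_1|\le\tfrac14$ while one only has $|\alpha_1 U|\ge 1/C_y$, and these two do not compare favorably once $C_y$ is large. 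The paper's own proof quietly invokes the different bound $\tfrac12 C_y\le|U|\le 2C_y$ (inconsistent with the earlier display), under which both sides are of the same order in $C_y$ and the inequality does close; this must be read as an implicit further normalization of $U$ and $W_y$ that the text does not spell out. Your suggested fix of shrinking $W_y$ at the resolution stage is in the right spirit, but beware that the product $x_1\,\partial_{x_1}U$ is invariant under a linear rescaling $x_1\mapsto\lambda x_1$, so a naive rescaling alone does not help. What does work is first normalizing $U(y)=1$ (by absorbing $U(y)^{1/\alpha_1}$ into $x_1$) and then shrinking $W_y$ so that $|U|$ stays near $1$ on a polydisc substantially larger than $|x_1|\le\tfrac14$; then $|x_1\,\partial_{x_1}U|<1\le\alpha_1|U|$ follows from Cauchy as you intend. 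In short, your approach matches the paper's, your diagnosis of the quantitative gap is correct, and the extra normalization step you flag is indeed needed.
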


\begin{proof}
We have
\begin{align*}
\frac{\partial}{\partial x_1}(U(x_1,\bar x)x_1^{\alpha_1}\bar x^{\bar \alpha}) & = \frac{\partial}{\partial x_1}U(x_1,\bar x)\cdot x_1^{\alpha_1}\bar x^{\bar \alpha}+\alpha_1 U(x_1,\bar x)x_1^{\alpha_1-1}\bar x^{\bar \alpha} \\
& = x_1^{\alpha_1-1}\bar x^{\bar \alpha}(\frac{\partial}{\partial x_1}U(x_1,\bar x)\cdot x_1+\alpha_1 U(x_1,\bar x)).
\end{align*}
Since by assumptions we have $\frac{1}{2} C_y \le |U(x)| \le 2 C_y$, by Cauchy formula we conclude that $|\frac{\partial}{\partial x_1}(U(x_1,\bar x))| \le C_y$. Consequently, for $0<|x_1| \le \frac{1}{4}$ we have
$$
\frac{\partial}{\partial x_1}U(x_1,\bar x))x_1+\alpha_1 U(x_1,\bar x)\ne 0.
$$
and hence $\frac{\partial}{\partial x_1}(U(x_1,\bar x)x_1^{\alpha_1}\bar x^{\bar \alpha}) = \frac{d}{dx_1}v(x_1)\ne 0$.
\end{proof}

Lemma \ref{lem:deriv.g}, combined with the implicit function theorem, shows that equation $P\circ\sigma(x)=c$ of the hypersurface $Y_c$ locally defines each branch of $x_1 = g(x_2,\ldots,x_n)$ as a regular function. Consequently, for any chart $U_j\in \U$, for any $\bar x \in \U$ and for any choice of the branch $g(\bar x)$ at $\bar x$ there is a unique analytic continuation of $g$ to the entire chart $U_j$. Indeed, using a local regularity of the chosen branch of $g$, and extending it along the straight segments from $\bar x$ to any other point of the ellipsoid $U_j$, (and, in fact, to $2 U_j$) we obtain the required continuation of $g$ to the entire chart $U_j$ and to its $2$-concentric extension. The corresponding chart $\bar U_j$ in $\bar \U$ is obtained as the composition $g\circ U_j$. The entire collection of the charts in $\bar \U$ is obtained as we compose all the charts $U_j\in \U$ with all the branches of $g$ over $U_j$.

Clearly, the charts in $\bar \U$ are $2$-doubling charts in $Y_c$. The complexity $\kappa(\bar \U)$, i.e. the number of the charts, is equal to $\alpha_1\times \kappa(\U)$. Now, choosing the neighborhoods $W_y$ as in the proof of Theorem \ref{thm-complement}, and applying the arguments above, as well as Proposition \ref{prp:doubl.cov.S.eta}, to each $W_y$, we obtain the required complexity bound. This completes the proof of Theorem \ref{thm-regular}. $\square$

\section{Analytic parameterizations of real semi-algebraic, subanalytic, and power-subanalytic sets} \label{sec:real}

In this section we treat analogues of Theorems \ref{thm-complement} and \ref{thm-regular} for real semi-algebraic, subanalytic, and power-subanalytic sets (see Theorems \ref{thm:semi-alg-a-chart}, \ref{thm:fibers-semi-alg-a-chart}, \ref{thm:semi-a-chart:g} and \ref{thm:fibers-a-chart:g:g}).
These notions of sets generalize the ones of globally subanalytic sets and of real semi-algebraic sets and are recalled below. The main idea behind the proofs of these two theorems is similar to the complex reduction from the previous section to the monomial case, this time not exactly by resolving the singularities, but, by using a pre-parameterization result from \cite{CPW}, based on preparation of power-subanalytic functions from \cite{M06}, and a rectilinear variant of preparation for subanalytic functions from \cite{CM13}. All these mentioned results are incarnations on the reals of results related to both Weierstrass preparation and resolution of singularities. In fact, we give a refined pre-parameterization which combines the mentioned results from \cite{CPW} and \cite{CM13}, see Theorem \ref{recti-pre-param}.

\subsection{Analytic parameterizations} \label{sec:a-b-m}

We define the following generalization of real semi-algebraic sets and functions, as an example to which the results in this section apply.
Call a set $X\subset \RR^n$ power-semi-algebraic if it is given by a finite Boolean combination of conditions on $x\in\RR^n$ of the form
$$
0 < p(x,(x_{i_1}^2)^{r_{i_1}},\ldots, (x_{i_s}^2)^{r_{i_s}})
$$
for some polynomials $p$ with coefficients in $\RR$, some integer $s\geq 0$, and some positive real numbers $r_{i_j}$ for some $i_j\in\{1,\ldots,n\}$ with $j=1,\ldots,s$. Call a function $f:X\to Y$ power-semi-algebraic if $X$, $Y$, and the graph of $f$ are power-semi-algebraic sets. By the complexity of such a Boolean combination, we mean the tuple consisting of the number of involved polynomials and for each involved polynomial the total degree, the number of variables of the polynomial (namely $n+s$ for $p$ as above), and the real numbers $r_{i_j}$. If no real exponents occur, (namely $s=0$ in the occurring polynomials as $p$ above), then one says semi-algebraic instead of power-semi-algebraic.

As a second and richer setting, let us define power-subanalytic sets, as generalization of globally subanalytic subsets of $\RR^n$.
Call a function $f:\RR^n\to \RR^m$ a restricted analytic map if its restriction to $[0,1]^n$ is analytic (in the above sense), and, the restriction of $f$ to the complement of $[0,1]^n$ in $\RR^n$ is identically zero. (Note that no continuity of $f$ is required on the boundary of $[0,1]^n$.)
Call a function $\RR^n\to \RR^m$ a power-basic function if it is a composition $f_s\circ\ldots\circ f_1$ for some $s$, where each $f_i$ is either a power-semi-algebraic map or a restricted analytic map.
Call a set $X\subset \RR^n$ power-subanalytic if it is given by a finite Boolean combination of conditions on $x\in\RR^n$ of the form
$$
B(x) >0
$$
for some power-basic functions $B$. Call a function $f:X\to Y$ power-subanalytic if $X$, $Y$, and the graph of $f$ are power-subanalytic sets. (Sometimes one says $\Ranp$-definable or $\Ran^\RR$-definable instead of power-subanalytic.) When moreover the involved power-semi-algebraic maps are semi-algebraic, then one calls the sets and functions globally subanalytic (or, in short, subanalytic).
By D. Miller's work \cite{M06}, the power-subanalytic sets form an o-minimal structure, the subject of \cite{vdD98}.
By the dimension of a (nonempty) power-subanalytic set $X\subset \RR^n$, we mean the maximum integer $\ell\geq 0$ taken over all linear maps $L:\RR^n\to \RR^\ell$ such that $L(X)$ has nonempty topological interior (this has good properties coming from o-minimality, see e.g. \cite{vdD98}).

 We now come to our two main results on parameterizations of power-subanalytic sets, resp. of subanalytic sets.

\begin{thm} \label{thm:semi-a-chart:g} Let $n$ and $m$ be positive integers.
Let $T$ and $\cX\subset T\times I^n$ be power-subanalytic sets such that for each $t\in T$, the fiber $X_t:=\{x\in I^n \mid (t,x)\in \cX\}$ has dimension $m$.
Then, there exist a power-subanalytic set $\cS\subset T\times I^n$ such that each fiber $S_t:=\{x\in I^n \mid (t,x)\in\cS\}$ has dimension $<m$ and a constant $C$ depending only on $\cX$ such that the following holds. For each $\delta>0$ with $\delta\leq 1$
there are power-subanalytic functions
$$
\psi_i:T\times I^m \to \RR^n \mbox{ for } i=1,\ldots, \kappa(\delta)
$$
with
$$
 \kappa(\delta)\leq C(\log 1/\delta)^m
$$
such that, for each $t\in T$, the maps
$$
\psi_{i,t}:I^m \to \RR^n:x\mapsto \psi_i(t,x)
$$
are a-charts and
$$
X_t\setminus S_{t,\delta}\subset \bigcup_{i=1}^{\kappa(\delta)} \psi_{i,t}(I^m),
$$
where $S_{t,\delta}$ is the $\delta$-neighborhood of  $S_t$ in $\RR^n$.
\end{thm}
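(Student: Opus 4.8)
The plan is to run the scheme of Section~\ref{sec-thms-1.1-and-1.2} with resolution of singularities (Theorem~\ref{Res.Sing}) replaced by a \emph{rectilinear pre-parameterization} of the family $\cX$ over $T$. First I would apply Theorem~\ref{recti-pre-param} to $\cX$, obtaining finitely many power-subanalytic maps $\phi_j:T\times(0,1)^m\to\RR^n$, $j=1,\dots,J$, with $J$ bounded in terms of $\cX$, such that: (i) for every $t$ the map $\phi_{j,t}:=\phi_j(t,\cdot)$ is real analytic on the open cube; (ii) the union of the images $\phi_{j,t}((0,1)^m)$ contains $X_t$ up to a power-subanalytic set of fibrewise dimension $<m$; and (iii) after subdividing $(0,1)^m$ into the $2^m$ subcubes adjacent to its vertices, translating the relevant vertex to $0$ and permuting coordinates, each $\phi_{j,t}$ is in ``monomial times analytic unit'' normal form in the coordinates $x_1,\dots,x_m$ measuring distance to the coordinate hyperplanes --- that is, the local shape $U(x)\,x^\alpha$ of Theorem~\ref{Res.Sing} componentwise, with units bounded above and below (uniformly in $t$, in terms of $\cX$) and extending holomorphically to complex polydiscs of radius comparable to $\min_i|x_i|$ around each point, with image of controlled diameter. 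I would put into $\cS$ both the uncovered leftover from (ii) and, for each $j$, the fibrewise images $\overline{\phi_{j,t}(\partial(0,1)^m)}$; each of the latter has dimension $\le m-1$, so the resulting $\cS$ is power-subanalytic with $\dim S_t<m$, and both $\cS$ and the constant $C$ below are manufactured from $\cX$ alone.

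Next I would verify that the part of each cube near its boundary is swallowed by $S_{t,\delta}$. By a \L ojasiewicz-type inequality in the spirit of Proposition~\ref{prp:dist.value} and Corollary~\ref{cor:monomial1}, applied to the prepared components of $\phi_j$ (whose data are bounded in terms of $\cX$), there is a constant $c>0$, depending only on $\cX$, such that, for all small $\delta$ and with $\eta:=\delta^c$, the $\eta$-neighborhood of the coordinate hyperplanes inside $(0,1)^m$ is mapped by $\phi_{j,t}$ into $S_{t,\delta}$. Hence
\[
X_t\setminus S_{t,\delta}\ \subset\ \bigcup_{j=1}^{J}\phi_{j,t}\bigl(Q_m^\eta\bigr),
\]
where $Q_m^\eta$ is the real analogue of the set of Proposition~\ref{prp:doubl.cov.S.eta}, namely the $m$-cube with the $\eta$-neighborhood of its coordinate hyperplanes deleted; the finitely many values of $\delta$ close to $1$ are covered by a bounded number of a-charts and may be absorbed into $C$.

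Then I would cover $Q_m^\eta$ by a-charts, the real affine analogue of Proposition~\ref{prp:doubl.cov.S.eta}: subdivide $Q_m^\eta$ into at most $C(\log 1/\eta)^m$ ``Whitney boxes'' whose $3$-dilations still stay at distance at least a fixed multiple of $\eta$ from all coordinate hyperplanes, obtained by exactly the suspension-and-induction argument of Section~\ref{sec-thms-1.1-and-1.2} with complex disks replaced by real intervals; the controlled holomorphic extension then furnishes the radius-$3$ polydisc condition of Definition~\ref{defn:acu} for the associated affine maps $I^m\to Q_m^\eta$. The maps $\psi_i$ are the compositions $\phi_{j,t}\circ A$ over all $j$ and all affine Whitney charts $A$; property (iii) is what makes each such composition an a-chart, since on a $3$-dilation of a Whitney box every coordinate is bounded below by a fixed multiple of $\eta$, so the monomial factor and the analytic unit of $\phi_{j,t}$ both extend holomorphically to a complex box of comparable relative size with image of controlled diameter, which after a harmless affine rescaling gives a map $I^m\to\RR^n$ extending to $\Delta_3^m$ with $\tilde\psi(z)-\psi(0)\in\Delta_1^n$. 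Counting gives $\kappa(\delta)\le J\cdot C(\log 1/\eta)^m=J\,C\,c^m(\log 1/\delta)^m\le C'(\log 1/\delta)^m$ with $C'$ depending only on $\cX$; since the $A$ do not depend on $t$ and the $\phi_j$ are power-subanalytic in $(t,x)$, the $\psi_i$ are power-subanalytic. This yields the theorem, and the globally subanalytic and semi-algebraic specializations follow by using the corresponding version of Theorem~\ref{recti-pre-param}.

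I expect the main obstacle to be Step~1, i.e. establishing Theorem~\ref{recti-pre-param}: one needs a single pre-parameterization of a power-subanalytic family that is simultaneously analytic on the open cells, in rectilinear monomial-times-unit form aligned with the coordinate hyperplanes near every face and corner, and uniform in the parameter $t$ --- so that both the \L ojasiewicz estimate of Step~2 and the bounded holomorphic extension of Step~3 are available. Getting all of this at once requires merging the pre-parameterization of \cite{CPW} with the rectilinear preparation of \cite{CM13} (and Miller's preparation \cite{M06} for the power case). A secondary but genuine point is the a-chart verification in Step~3: checking that $\phi_{j,t}\circ A$ meets Definition~\ref{defn:acu}, in particular the radius-$3$ extension and the $\Delta_1^n$-bound, which is exactly where the quantitative control of the units and of the size of the holomorphic neighbourhood in terms of $\min_i|x_i|$ is used.
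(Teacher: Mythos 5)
Your proposal takes a genuinely different route from the paper's, and it has one real gap that the paper avoids.

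The gap: you apply Theorem~\ref{recti-pre-param} (rectilinear pre-parameterization) to a \emph{power-subanalytic} family $\cX$. But Theorem~\ref{recti-pre-param} is stated and proved only for subanalytic sets, because its proof relies on Theorem~1.5 of \cite{CM13}, which has no power-subanalytic analogue in the paper; the paper explicitly says ``We leave the discovery of a variant of Theorem~\ref{recti-pre-param} for power-subanalytic sets to the future.'' You flag this as ``the main obstacle'' and propose to ``merge'' \cite{CPW} and \cite{CM13}, but the point is that the paper never performs that merge for Theorem~\ref{thm:semi-a-chart:g} --- it does not need to. So as written your Step~1 rests on an unavailable tool. (Your scheme would be fine for the subanalytic analogue, and is in fact close in spirit to the paper's proof of Theorem~\ref{thm:fibers-a-chart:g:g}, which is the result where Theorem~\ref{recti-pre-param} is actually used.)

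What the paper does instead, and why it is simpler here. It applies the \emph{non}-rectilinear pre-parameterization Theorem~\ref{pre-param} (which does hold for power-subanalytic sets), obtaining maps $\varphi_i:\cU_i\to\cX$ whose associated bounded-monomial maps $b_i$ have bounded $C^1$-norm. The set $\cS$ is taken to be the union of the images $\overline\varphi_i(\overline{\cU_i}\setminus\cU_i)$ of the boundaries under the continuous extensions; each fiber has dimension $<m$ by o-minimal dimension theory. The replacement for your \L ojasiewicz step is much more elementary: the bounded $C^1$-norm gives a uniform Lipschitz constant $c$ for the maps $\varphi_{i,t}$, so removing $S_{t,\delta}$ from the target corresponds to restricting to $U_{i,t}\cap(\delta/c,1)^m$ in the domain. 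No \L ojasiewicz inequality, no rectilinear alignment near faces and corners. Your Step~3 (Whitney boxes and affine rescaling) is morally present in the paper, but packaged as Lemma~\ref{lem:b-m-a-chart}: one covers $(\varepsilon,1)^m$ by $O(\log(1/\varepsilon)^m)$ boxes $\prod_i(y_i/2,3y_i/2)$ and uses Taylor estimates to get a-charts covering the graph of a single bounded-monomial $a x^\mu$; the paper then concludes by citing the composition properties of a-charts from \cite{Y91,Y08} rather than reverifying Definition~\ref{defn:acu} for $\varphi_{i,t}\circ A$ directly. In short, the paper's route buys applicability to the power-subanalytic case and a simpler Lipschitz argument, at the cost of not knowing the monomials are aligned with the cube faces; your route buys that rectilinear picture but only in the subanalytic setting.
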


\begin{thm} \label{thm:fibers-a-chart:g:g} Let $\ell,m,n,k$ be positive integers.
Let $T\subset I^k$ and $\cX\subset T\times I^n$ be subanalytic sets such that, for each $t\in T$, the fiber $X_t:=\{x\in I^n\mid (t,x)\in \cX\}$ has dimension $m$.
Then, there exist a constant $C$ and a subanalytic set $S\subset I^k$ of dimension less than the dimension of $T$ such that the following holds for each $\delta>0$ with $\delta\leq 1$.
There are subanalytic functions
$$
\psi_i:T\times I^{m} \to I^n \mbox{ for } i=1,\ldots, \kappa(\delta)
$$
with
$$
\kappa(\delta)\leq C(\log 1/\delta)^{m}
$$
such that for any $t\in T\setminus S_\delta$ the functions
$$
\psi_{i,t}:I^{m}\to \RR^n : x\mapsto \psi_i(t,x)
$$
are a-charts and
$$
X_t \subset \bigcup_{i=1}^{\kappa(\delta)} \psi_{i,t}(I^m),
$$
where $S_\delta$ the $\delta$-neighborhood  of  $S$.
\end{thm}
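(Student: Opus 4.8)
The plan is to follow the architecture of the proof of Theorem~\ref{thm-regular}, with the refined rectilinear pre-parameterization of subanalytic families (Theorem~\ref{recti-pre-param}, built from \cite{CPW} and \cite{CM13}) playing the role that resolution of singularities (Theorem~\ref{Res.Sing}) plays in the polynomial case, and with the whole construction carried out definably in the parameter $t$. Theorem~\ref{thm:fibers-semi-alg-a-chart} is then the special case where $T=I^k$ and $\cX=\{(c,x)\in I^k\times I^n\mid f(x)=c\}$; the fact that $S$ is only subanalytic there (and the paper's question whether it can be taken semi-algebraic) reflects that preparation produces fractional powers and real exponents even over a semi-algebraic family. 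The exponent $m$ in $\kappa(\delta)\le C(\log 1/\delta)^m$ will be forced by the dimension of the ``model region'' that we end up covering, not by $n$.

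First I would apply Theorem~\ref{recti-pre-param} to $\cX\subset T\times I^n$, obtaining finitely many prepared pieces: on each piece, after a subanalytic monomial substitution, the fiber $X_t$ is described, one branch at a time, as the graph of a real-analytic function $g_t$ over a rectilinear set, with all prepared data (units bounded above and below, exponents, centers) under control, and with the locus where preparation degenerates projecting to a subanalytic set $S\subset I^k$. Since projections of subanalytic sets are subanalytic and this locus has empty interior over $T$, one gets $\dim S<\dim T$. The quantitative heart of the argument --- the analogue of Proposition~\ref{prp:dist.value} together with the Lemma of Section~\ref{sec:cov.hypersurface} --- is a \L ojasiewicz-type bound read off from the prepared form: there are constants $c>0$ and $N$, depending only on $\cX$, such that whenever $\dist(t,S)\ge\delta$ the pulled-back fiber on each piece avoids the $\eta$-neighborhood of the degenerate coordinate hyperplanes with
\begin{equation*}
\eta\ \ge\ c\,\delta^{N}.
\end{equation*}
Thus, away from $S_\delta$ the entire fiber $X_t$ already lies in the good region $I^m$ minus an $\eta$-tube around those hyperplanes, so --- unlike in Theorem~\ref{thm:semi-a-chart:g} --- nothing has to be removed from inside $X_t$ itself.

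Next I would invoke the model covering that underlies Theorem~\ref{thm:semi-a-chart:g}, namely the real analogue of Proposition~\ref{prp:doubl.cov.S.eta}: the complement in $I^m$ of the $\eta$-neighborhood of the coordinate hyperplanes is covered by at most $C_0(\log 1/\eta)^m$ a-charts $I^m\to\RR^m$, assembled from Whitney-disk-type rings accumulating to each hyperplane via a suspension construction, and carrying a generous enlargement factor (well above $3$). For each prepared piece and each branch of $g_t$ the implicit function theorem applies --- Lemma~\ref{lem:deriv.g} is the prototype, with Cauchy estimates on the units controlling the holomorphic extension of $g_t$ --- so $g_t$ is analytic and boundedly complex-extendible over each (suitably shrunk) model chart. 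Composing each model a-chart with $g_t$, then with the monomial substitution, then with the pre-parameterization map, and checking that the composite still extends holomorphically to $\Delta_3^m$ with image in $\Delta_1^n$ after recentering (Definition~\ref{defn:acu}), produces a-charts whose images cover $X_t$ for every $t\in T\setminus S_\delta$. Because every ingredient comes from one definable pre-parameterization of the family, these assemble into finitely many subanalytic maps $\psi_i:T\times I^m\to I^n$, and the count is (number of pieces)$\,\times\,$(number of branches)$\,\times\,C_0(\log 1/\eta)^m\le C(\log 1/\delta)^m$, the exponent $N$ being absorbed into $C$ via $\log(1/\eta)\le\log(1/c)+N\log(1/\delta)$.

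The main obstacle, exactly as in Section~\ref{sec-thms-1.1-and-1.2}, is the a-chart bookkeeping: the composite is a chain (model chart $\to$ graph function $\to$ monomial substitution $\to$ pre-parameterization) in which each factor is holomorphic only on a bounded complex polydisc, and one must verify that the enlargement factor $3$ and the image bound survive composition. The remedy is the same: build slack into the model covering by using a large $\gamma$-doubling factor, and control each holomorphic extension through the explicit unit bounds from preparation (Cauchy estimates, as in Lemma~\ref{lem:deriv.g}). The second, more structural difficulty is arranging that the pre-parameterization, the set $S$, the exponent $N$, and the constants are genuinely uniform in $t$; this is precisely what Theorem~\ref{recti-pre-param} is designed to deliver, and it is also what yields $\dim S<\dim T$ rather than merely $\dim S<k$.
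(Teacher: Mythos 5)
Your overall architecture (rectilinear pre-parameterization $\to$ prepared-form lower bound on the coordinates $\to$ a-chart model covering $\to$ compose) is the paper's architecture, but you have imported several pieces of complex-case machinery from Theorem~\ref{thm-regular} that the paper's real proof neither uses nor needs, and one of these does not transfer. Concretely: (i) the paper takes $S=\overline T\setminus T$ after the Lipschitz transformation of $T$, not a ``degeneracy locus of preparation''; the point is simply that for $t\in T\setminus S_\delta$ the rectilinear form of $U_{i,t}$ together with the a-b-m walls (with integer exponents) forces $x_1,\ldots,x_\ell>c\delta^M$ on the fiber, which is the \L ojasiewicz-type lower bound you correctly anticipate. (ii) The model covering is \emph{not} a real analogue of the suspension Proposition~\ref{prp:doubl.cov.S.eta}; it is Lemma~\ref{lem:b-m-a-chart}, whose proof is a dyadic (Whitney) decomposition of $(\varepsilon,1)^m$ by products of intervals $I_{y_i}=(y_i/2,3y_i/2)$ followed by Taylor estimates for the monomial on each dyadic cube. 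What the proof of Theorem~\ref{thm:fibers-a-chart:g:g} actually invokes is a variant of Lemma~\ref{lem:b-m-a-chart} adapted to the rectilinear form (bounding only $x_1,\ldots,x_\ell$ from below, and using that the $x_{\ell+i}$-exponents in $b$ are nonnegative integers), plus the composability properties of a-charts from \cite{Y91,Y08}.

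The genuinely out-of-place ingredient in your proposal is the implicit function theorem / ``branches of $g_t$'' step. Theorem~\ref{recti-pre-param} does not hand you an implicit equation $P\circ\sigma=c$ that must be solved as in Lemma~\ref{lem:deriv.g}; it hands you explicit triangular maps $\varphi_i:\cU_i\to\cX$ that already present $\cX$ as a graph with a-b-m components, so there is nothing to invert and no multivalued $g$. Moreover, the IFT/nonvanishing-derivative argument is specifically a complex phenomenon: in Lemma~\ref{lem:deriv.g} the nonvanishing of $v'$ rests on $|U|$ being bounded below and Cauchy estimates forcing $|\partial_{x_1}U|\le C_y$, which gives a lower bound on $|v'|$ over $\CC$. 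In the real setting real derivatives of units can vanish, and no such nondegeneracy is available — nor is it needed, because holomorphic extendibility of the $\varphi_i$ comes for free from the a-b-m structure (the analytic unit $F$ is defined and nonvanishing on a neighborhood of $\overline{b(U)}$, and the bounded-monomial map is handled by Lemma~\ref{lem:b-m-a-chart}). So if you tried to run your plan literally, the IFT step would stall; replacing it by the a-b-m composability argument, and replacing the suspension model by Lemma~\ref{lem:b-m-a-chart}, brings your proposal in line with the paper's proof.
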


In Theorem \ref{thm:semi-a-chart:g}, small tubes are removed of the sets which are to be parameterized, where as in Theorem \ref{thm:fibers-a-chart:g:g}, a small tube is removed from the parameter space, thus leaving out a small portion of the family members. We leave to the reader to formulate the special case of Theorem \ref{thm:fibers-a-chart:g:g} with level sets, similar as in Theorem \ref{thm-regular}.

For the proofs of Theorems \ref{thm:semi-a-chart:g} and \ref{thm:fibers-a-chart:g:g} we will use freely basic properties of a-charts given in \cite{Y91} (for acu's). Note that Theorem \ref{thm:semi-alg-a-chart} is a special case of Theorem \ref{thm:semi-a-chart:g}.
We give a slight refinement of Theorem \ref{thm:fibers-a-chart:g:g} at the end of the paper, in Section \ref{sec:weier:sys}, from which Theorem \ref{thm:fibers-semi-alg-a-chart} follows as a special case.

\subsection{Pre-parameterization and the proof of Theorem \ref{thm:semi-a-chart:g}}
 \label{sec:pre-par-proof}

We recall the notions of bounded-monomial functions and of a-b-m functions from \cite{CPW} as the following Definitions \ref{defn:b-m} and \ref{defn:prepared}.

\begin{defin}[bounded-monomial functions] \label{defn:b-m}
Let $U$ be a subset of $(0,1)^m$. A function $b:U\to \RR$ with bounded range is called bounded-monomial if either $b$ is identically zero, or, $b$ is
of the form
$$
x \mapsto x^\mu := \prod_{i=1}^m x_i^{\mu_{i}}
$$
for some $\mu_{i}$ in $\RR$ and $\mu=(\mu_i)_i$. We say that only integer exponents appear in the bounded-monomial function $b$ if moreover $\mu\in \mathbb{Z}^m$ (including the case that $b$ is identically zero). A map $U\to \RR^n$ is called bounded-monomial if all of its component functions are, and similarly for the appearance of only integer exponents.
\end{defin}

\begin{defin}[a-b-m functions] \label{defn:prepared}
Let $U$ be a subset of $(0,1)^m$.
A function $f:U\to \RR$ is called a-b-m, in full analytic-bounded-monomial, if it is of the form
$$
f(x) = b_j(x) F( b_1(x),\ldots, b_s(x) )
$$
for some bounded-monomial map $b:U\to \RR^s$ for some $s$ and for some nonvanishing analytic function $F:V\to \RR$, where $V$ is an open neighborhood of $\overline{b(U)}$, the topological closure of $b(U)$ in $\RR^s$, and where $j$ lies in $\{1,\ldots,s\}$. We call the map $b$ an associated bounded-monomial map of $f$.

Finally, call a map $f:U\to \RR^n$ a-b-m, with associated bounded-monomial map $b$, if all its component functions are (namely, each $f_i$ is a-b-m, and, $b$ is an associated bounded-monomial map for each $f_i$).
\end{defin}

The a-b-m functions with an associated bounded-monomial map $b$ such that moreover $b$ has bounded $C^1$-norm have particularly nice properties as illustrated by their use in \cite{CPW} and in the proofs of Theorems \ref{thm:semi-a-chart:g} and \ref{thm:fibers-a-chart:g:g}.

\begin{defin}[Cells and their walls]
A power-subanalytic subset $C\subset \RR^n$ is called a cell, if
$$
C = \{x\in \RR^n\mid \wedge_{i=1}^n\, \alpha_i(x_{<i}) \sq_{i1} x_i \sq_{i2} \beta_i(x_{<i}) \}
$$
for some continuous power-subanalytic functions $\alpha_i$ and $\beta_i$ with $\alpha_i < \beta_i$, $x_{<i} = (x_1,\ldots,x_{i-1})$, and with $\sq_{i1}$ either $=$, $<$, or no condition, and with $\sq_{i2}$ either $<$ or no condition, with the conventions that $\sq_{i2}$ is no condition if $\sq_{i1}$ is equality. If $\sq_{i1}$ is $=$ or $<$ then we call $\alpha_i$ a wall of $C$. Likewise, if $\sq_{i2}$ is $<$ then we also call $\beta_i$ a wall of $C$.
\end{defin}

We can now recall the pre-parameterization result from \cite{CPW} that we use to prove Theorems \ref{thm:semi-a-chart:g} and \ref{thm:fibers-a-chart:g:g}. The boundedness of the $C^1$-norms in item \ref{pre4} is a key property (without this boundedness, the result would be much more easy to prove). Note that the triangularity property from \ref{pre3} allows one to use the result uniformly in family settings, and this is indeed exploited in this way below as well as in \cite{CPW}.

\begin{thm}[Pre-parameterization, \cite{CPW}] \label{pre-param}
Let $X\subset (0,1)^{n}$ be power-subanalytic, and suppose that $X$ is the graph of a power-subanalytic function $f: U\to (0,1)^{n-m}$ for some $m\geq 0$ and open set $U\subset (0,1)^m$. Then, there exist finitely many power-subanalytic maps
$$
\varphi_i:U_i\to X
$$
such that the following hold
\begin{enumerate}[label = \textup{(\arabic*)}, ref = (\arabic*)]
\item \label{pre1} $\bigcup_i \varphi_i(U_i)=X$.

\item \label{pre2} Each $U_i$ is an open cell in $(0,1)^m$. 

\item \label{pre3} Each $\varphi_i$ is a triangular map, in the sense that for each $j\leq m$ there is a unique map $\Pi_{<j}(U_i)\to \Pi_{<j}(X)$ making a commutative diagram with $\varphi_i$ and the projection maps $X\to
 \Pi_{<j}(X)= \Pi_{<j}(U)$ and $U_i\to \Pi_{<j}(U_i)$, with in both cases $\Pi_{<j}$ the projection to the first $j-1$ coordinates.

\item \label{pre4}
For each $i$, the map $\varphi_i$ and the walls $\alpha$ of $U_i$
are a-b-m with an associated bounded-monomial map $b_i$ such that $b_i$ has bounded $C^1$-norm.
\end{enumerate}
\end{thm}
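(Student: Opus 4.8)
The plan is to prove Theorem~\ref{pre-param} by induction on $m$, the number of variables of the domain $U$, peeling off one variable at a time by a one-variable preparation theorem and then organizing the output into triangular maps over cells; the preparation theorems of \cite{M06} and \cite{CM13} play here the role that resolution of singularities plays in Section~\ref{sec-thms-1.1-and-1.2}. The base case $m=0$ is trivial, since $X$ is then a single point. For the inductive step I would apply D.~Miller's preparation theorem \cite{M06}, in the rectilinear form of \cite{CM13}, to each component function of $f$ in the last variable $x_m$, with $x_{<m}=(x_1,\dots,x_{m-1})$ as parameters. This produces a finite partition of $U$ into cells on each of which, after a triangular power-subanalytic change of the variable $x_m$ (a recentering $x_m\mapsto x_m-\alpha(x_{<m})$, together with an inversion when an endpoint of the fibre has to be moved to $0$), every component of $f$ acquires a prepared form $c(x_{<m})\,|x_m|^{\mu}\,\Phi\big(d(x_{<m}),\,|x_m|^{\nu}e(x_{<m})\big)$, where $c,d,e$ are power-subanalytic in $x_{<m}$, the quantity $|x_m|^{\nu}e(x_{<m})$ is bounded on the cell, and $\Phi$ is analytic and non-vanishing on a neighbourhood of the closure of the range of its arguments.

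Next I would feed the parameter data back into the induction. The coefficient $c$, the functions $d,e$, and the walls $\alpha$ of the cells just produced are power-subanalytic functions of the $m-1$ variables $x_{<m}$, so by the inductive hypothesis each of them is pre-parameterized by triangular a-b-m maps over open cells $V_k\subset(0,1)^{m-1}$ whose associated bounded-monomial maps have bounded $C^1$-norm. Pulling the $x_m$-prepared form of $f$ back along these parameter reparameterizations, and then reparameterizing the remaining $x_m$-interval so that the two walls of the $x_m$-fibre become a-b-m with the required derivative bound, yields maps defined on open cells $U_i\subset(0,1)^m$; composing with the graph map $x\mapsto(x,f(x))$ gives the sought maps $\varphi_i:U_i\to X$. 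Property \ref{pre1} holds because the cells exhaust $U$, hence $X$; property \ref{pre2} is built in; and property \ref{pre3} holds because every ingredient --- the parameter reparameterizations and the substitution in $x_m$ --- is triangular, so their composition commutes with all the projections $\Pi_{<j}$.

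The step I expect to be the main obstacle is property \ref{pre4}: one must guarantee that the associated bounded-monomial map $b_i$ has not merely bounded range (which is automatic once the prepared form is reached) but bounded $C^1$-norm. On $(0,1)^m$ a monomial $x^\mu$ has bounded partial derivatives only when every exponent is $0$ or at least $1$, so the real exponents $\mu,\nu$ coming out of preparation, as well as those hidden inside the parameter reparameterizations, must be tamed; I would do this with a further common power substitution $x_i\mapsto x_i^{N}$, which clears denominators so that all exponents become integers $\geq 1$, preceded by finitely many additional subdivisions of each cell separating the regimes in which different monomials dominate --- this is precisely the rectilinear mechanism of \cite{CM13}. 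The delicate verifications are that these substitutions stay triangular and power-subanalytic, that they do not destroy the a-b-m form already achieved for the parameter functions, and that the whole procedure closes up with only finitely many pieces at each of the $m$ stages. A final point worth stressing is that triangularity \ref{pre3} must survive every one of these substitutions, since it is exactly what allows the conclusion to be used uniformly over a family of fibres, as is done in the proofs of Theorems~\ref{thm:semi-a-chart:g} and \ref{thm:fibers-a-chart:g:g}.
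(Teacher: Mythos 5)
First, a framing remark: the paper does not prove Theorem~\ref{pre-param} at all --- it is quoted verbatim from \cite{CPW} and used as a black box. What the paper \emph{does} prove is the refinement Theorem~\ref{recti-pre-param}, and it explicitly says that the argument there ``is similar to the proof of the Pre-parameterization result of \cite{CPW}''. So the fair comparison is with the proof of Theorem~\ref{recti-pre-param}, and on the level of architecture your sketch matches it: induction on $m$, peel off $x_m$ by one-variable preparation with $x_{<m}$ as parameters, feed the coefficient data and walls back into the inductive hypothesis, and keep everything triangular so the family version is automatic. That part is in line with the paper.

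The gap is in how you propose to obtain the bounded $C^1$-norm in \ref{pre4}. You say ``a monomial $x^\mu$ has bounded partial derivatives only when every exponent is $0$ or at least $1$'' and then that a common power substitution $x_i\mapsto x_i^N$ ``clears denominators so that all exponents become integers $\geq 1$.'' Both claims fail. A bounded-monomial on a cell may well have negative exponents --- e.g.\ $x_1/x_2$ on $\{0<x_1<x_2<1\}$ is bounded, yet $\partial_{x_1}(x_1/x_2)=1/x_2$ is unbounded --- and the substitution $x_i\mapsto x_i^N$ sends a negative exponent $-q$ to $-Nq$, which is still negative; it cannot turn negative real exponents into integers $\geq 1$. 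Moreover, on a nontrivial cell the boundedness of the partial derivatives is governed by the interplay between the exponents and the walls, not by the signs of the exponents alone, so a pure power rescaling cannot resolve it. The actual device, visible in the paper's proof of Theorem~\ref{recti-pre-param}, is in two stages. First, to control $|\partial b/\partial x_m|$: after partitioning so that some component $b_j$ has $|\partial b_j/\partial x_m|$ dominant as in \eqref{eq:bjbj'}, one performs the inverse-function change of variable $x_m \mapsto b_j(t,x)/N$ on the pieces where this derivative exceeds~$1$; by the chain rule this forces all $x_m$-partials below~$1$. (In the rectilinear case $\ell<m$ this step is free because bounded integer exponents in a freely varying $x_m\in(0,1)$ are automatically nonnegative.) Second, to control the partials in $t$ and $x_{<m}$: one forms the limits $h_\alpha$ of $b/N$ and of $N^{-1}\partial b/\partial x_m$ at the walls $\alpha$ bounding $x_m$ (these limits exist and $Nh_\alpha$ is again bounded-monomial), assembles a map $F$ out of the nonzero $|h_\alpha|$ and the walls themselves, applies the inductive hypothesis to the \emph{graph of $F$}, and pulls the whole construction back along the resulting maps $\psi_\tau$. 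It is precisely this recursion into the wall-limit data --- not a power substitution --- that delivers the bounded $C^1$-norm in \ref{pre4}, and it is the piece your sketch is missing.
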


We can now give the proof of Theorem \ref{thm:semi-a-chart:g}.

\begin{proof}[Proof of Theorem \ref{thm:semi-a-chart:g}]
By transforming $T$ if necessary and by working piecewise on $\cX$, we may suppose that $T\subset (0,1)^{k}$ for some $k$, that $T$ is open, and that $\cX$ equals the graph of a power-subanalytic function $f:\cU\to \RR^{n-m}$ for some open $\cU\subset T\times (0,1)^m$ (indeed, these are typical manipulations involving basic finiteness properties of o-minimality, see \cite{vdD98}). Apply the pre-parameterization result Theorem \ref{pre-param} to $\cX$. Up to another transformation of $T$ and working piecewise on $T$, we reduce to the case that we have finitely many power-subanalytic maps
$$
\varphi_i:\cU_i\to \cX
$$
such that $\cU_i\subset T\times (0,1)^m$ is definable and open, and such that $\varphi_i$ and the walls of $\cU_i$ are a-b-m maps with associated bounded monomial map $b_i$ with bounded $C^1$-norm. Clearly, by their special nature, the $\varphi_i$ have a unique continuous extention $\overline \varphi_i:\overline\cU_i \to \RR^{k+n}$ to the topological closure $\overline\cU_i$ of $\cU_i$ in $\RR^{k+m}$. This extension is power-subanalytic, since clearly definable.
Let $\cS$ be the union over $i$ of the sets $\overline\varphi_i ( \overline\cU_i\setminus \cU_i )$, namely the images of the boundaries. Then, $\cS$ has dimension less than $m$ (see the dimension theory for o-minimal structures explained in \cite{vdD98}). Also, by the special form of the maps $\varphi_i$, there is a constant $c\geq 1$ such that for each $t$ and each $i$, the map $\varphi_{i,t}$ is Lipschitz-continuous with Lipschitz constant $c$, where the metric is the supremum norm.
Now, fix $\delta>0$.
For each $t\in T$, write
$$
\varphi_{i,t}:U_{i,t}\to X_t:x\mapsto \varphi_i(t,x)
$$
with
$$
U_{i,t}:=\{x\mid (t,x)\in \cU_i\}.
$$
Further, write
$$
\varphi_{i,t,\delta}
$$
for the restriction of $\varphi_{i,t}$
to
$$
U_{i,t}\cap (\delta/c,1)^m.
$$
Then, by the mentioned Lipschitz continuity with Lipschitz constant $c$, the union over $i$ of the images of the $\varphi_{i,t,\delta}$ contains $X_t\setminus S_{t,\delta}$. We claim that there is a constant $C\geq 1$ such that for each $t$ and each $i$, the graph of $\varphi_{i,t,\delta}$ can be covered by no more than $ C \log(1/\delta)^m$ a-charts. This can be seen as follows. Since $b_i$ is bounded-monomial and by Lemma \ref{lem:b-m-a-chart}, for each $i$ there is $C_i$ such that for each $t\in T$, the graph of
$$
b_{i,t}:(\delta/c,1)^m \cap U_{i,t} \to \RR^n:x\mapsto b_i(t,x)
$$
can be covered by no more than $C_i \log ( 1/\delta)^m$ a-charts. Now, the claim and the theorem follow from properties for covering compositions by a-charts from \cite{Y91, Y08}.
\end{proof}

The following lemma treats the case of monomial functions with real exponents and bounded range.

\begin{lem} \label{lem:b-m-a-chart}
Given $\mu\in\RR^m$, there exists $C>0$ such that the following holds.
Let $U\subset (0,1)^m$ be open and let $b:U\to (0,1)$ be a map of the form
$$
x\mapsto a x^\mu
$$
for some real $a>0$ and some $\mu\in\RR^m$. For each $\varepsilon>0$ with $\varepsilon<1/2$ let $b_\varepsilon$ be the restriction of $b$ to $U\cap (\varepsilon,1)^m$.
Then, there are $N_\varepsilon$ many a-charts $\varphi_i:I^m\to \RR^{m+1}$ with $N_\varepsilon \leq C\log(1/\varepsilon)^m$ and such that the graph of $b_\varepsilon$ is contained in the union of the sets $\varphi_i(I^m)$.
\end{lem}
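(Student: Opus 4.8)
The plan is to cover the box $(\varepsilon,1)^m$ (hence $U\cap(\varepsilon,1)^m$) by a multiplicative grid of small boxes and to attach one a-chart to each box via a logarithmic change of coordinates. The point of this change of coordinates is that in the new variable the monomial $x\mapsto a x^\mu$ becomes an \emph{entire} exponential function: the power-function branch points disappear, so the only thing left to verify for the a-chart property of Definition \ref{defn:acu} is the size estimate $\tilde\psi(z)-\psi(0)\in\Delta_1^n$.

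Concretely, I would first fix a small parameter $\alpha=\alpha(\mu)>0$ with $e^{3\alpha}\le 2$ and $e^{\alpha|\mu|}\bigl(e^{3\alpha|\mu|}-1\bigr)\le 1$, where $|\mu|:=\sum_i|\mu_i|$; both conditions hold for all sufficiently small $\alpha>0$ since $e^{3\alpha}\to 1$ and $e^{\alpha|\mu|}(e^{3\alpha|\mu|}-1)\to 0$ as $\alpha\to 0$. Set $K:=\lceil \log(1/\varepsilon)/(2\alpha)\rceil+1$ and, for $j=1,\dots,K$, put $\rho^{(j)}:=e^{-(2j-1)\alpha}$, so that the intervals $[\rho^{(j)}e^{-\alpha},\rho^{(j)}e^{\alpha}]=[e^{-2j\alpha},e^{-2(j-1)\alpha}]$ cover $[\varepsilon,1]$. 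For $J=(j_1,\dots,j_m)\in\{1,\dots,K\}^m$ let $\rho_J:=(\rho^{(j_1)},\dots,\rho^{(j_m)})$ and $Q_J:=\prod_i[\rho^{(j_i)}e^{-\alpha},\rho^{(j_i)}e^{\alpha}]$; the $Q_J$ cover $[\varepsilon,1]^m\supseteq U\cap(\varepsilon,1)^m$. I would keep only the (at most $K^m$) indices $J$ with $Q_J\cap U\neq\emptyset$. For such $J$ there is $x\in Q_J$ with $a x^\mu<1$, and since $\rho^{(j_i)}/x_i\in[e^{-\alpha},e^{\alpha}]$ one has $\rho_J^\mu/x^\mu\le e^{\alpha|\mu|}$ on $Q_J$, giving the crucial bound $a\,\rho_J^\mu\le e^{\alpha|\mu|}$.

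For each kept $J$ I would take $\psi_J:I^m\to\RR^{m+1}$,
$$
\psi_J(t)=\Bigl(\bigl(\rho^{(j_i)}e^{\alpha t_i}\bigr)_{i=1}^m,\ a\,\rho_J^\mu\,e^{\alpha\langle\mu,t\rangle}\Bigr),
$$
which is real analytic and whose image is exactly the graph of $b$ over $Q_J$ (the first $m$ components sweep out $Q_J$ diffeomorphically, and the last one equals $a x^\mu=b(x)$). Its obvious holomorphic extension $\tilde\psi_J$, obtained by replacing $t$ by $z$, is entire, so in particular defined on $\Delta_3^m$; and for $z\in\Delta_3^m$, using $|e^w-1|\le e^{|w|}-1$, $\rho^{(j_i)}<1$, $|\langle\mu,z\rangle|\le 3|\mu|$, and the choice of $\alpha$, we get $|\rho^{(j_i)}(e^{\alpha z_i}-1)|\le \rho^{(j_i)}(e^{3\alpha}-1)\le 1$ and $|a\,\rho_J^\mu(e^{\alpha\langle\mu,z\rangle}-1)|\le e^{\alpha|\mu|}(e^{3\alpha|\mu|}-1)\le 1$, so $\tilde\psi_J(z)-\psi_J(0)\in\Delta_1^{m+1}$ and $\psi_J$ is an a-chart. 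Since the kept boxes still cover $U\cap(\varepsilon,1)^m$, the union of the $\psi_J(I^m)$ contains the graph of $b_\varepsilon$, and $N_\varepsilon\le K^m\le C\log(1/\varepsilon)^m$ with $C=C(\mu)$, absorbing the additive constant in $K$ using $\varepsilon<1/2$. The only genuinely delicate step is the uniform bound on $a\,\rho_J^\mu$: this is exactly where the hypothesis $b(U)\subset(0,1)$ is used, and it is the reason one must discard the boxes that miss $U$. Everything else is a direct computation.
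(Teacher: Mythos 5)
Your proof is correct, and it follows the same global strategy as the paper (a multiplicative grid of $O(\log(1/\varepsilon)^m)$ boxes, with the crucial observation that $b(U)\subset(0,1)$ forces $a\rho_J^\mu$ to be bounded on every box that meets $U$). Where you differ is in the local step. The paper covers $(\varepsilon,1)^m$ by wider boxes $B_y=\prod_i(y_i/2,3y_i/2)$ with $y\in U_{1,\varepsilon}$, pulls each box back to a fixed cube $(-C_3,C_3)^m$ by an affine map, composes with $x\mapsto a x^\mu$, and then invokes Taylor-coefficient estimates for the resulting products of $(1+z_i/2C_3)^{\mu_i}$ to split $(-C_3,C_3)^m$ into $C_2(\mu)$ a-chart domains $I^m$. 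You instead pass to logarithmic coordinates $x_i=\rho^{(j_i)}e^{\alpha t_i}$, which turns the monomial into a single exponential $a\rho_J^\mu e^{\alpha\langle\mu,t\rangle}$; this extension is entire, so no Taylor estimate or secondary subdivision into $C_2$ pieces is needed, and each box contributes exactly one a-chart once $\alpha$ is chosen small enough that $e^{3\alpha}\le 2$ and $e^{\alpha|\mu|}(e^{3\alpha|\mu|}-1)\le 1$. What the exponential change of variables buys is the elimination of the delicate Taylor-coefficient bookkeeping; what the paper's affine approach buys is slightly less dependence on the specific multiplicative structure (but at the cost of working with bounded-range power functions directly). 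Both yield the same $C\log(1/\varepsilon)^m$ bound with $C$ depending only on $\mu$. One tiny hygiene remark: you describe the image of $\psi_J$ as ``exactly the graph of $b$ over $Q_J$,'' but $b$ is defined only on $U$; what you use, and correctly so, is only that this image contains the graph of $b$ over $Q_J\cap U$.
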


\begin{proof}[Proof of Lemma \ref{lem:b-m-a-chart}]
For any $A>0$, let $U_A$ be the set of $x\in \RR_{>0}^m$ such that $a x^\mu<A$ with $\RR_{>0}$ the set of positive real numbers, and let $b_A$ be the map $x\mapsto a x^\mu$ on $U_A$. Write $U_{1,\varepsilon} := U_1\cap (\varepsilon,1)^m$ and let $b_{1,\varepsilon}$ be the map $x\mapsto a x^\mu$ on $U_{1,\varepsilon}$.
We will in fact prove slightly more than the lemma: we will cover the graph of $b_{1,\varepsilon}$ by $N_\varepsilon$ a-charts going into the graph of $b_A$ with $A=2^{M}$ and $M=\sum_i |\mu_i|$ and with $N_\varepsilon \leq C\log(1/\varepsilon)^m$ for $C$ depending only on $\mu$. For any
$z\in (0,1)$, let $I_z$ be the open interval $(z/2,3z/2)$ in $\RR_{>0}$. Choose any $y\in U_{1,\varepsilon}$. Then, by construction, the set
$$
B_y := \prod_{i=1}^m I_{y_i}
$$
is contained in $U_A$. Moreover, $(\varepsilon,1)^m$ and hence also $U_{1,\varepsilon}$ can clearly be covered by $C_1\log(1/\varepsilon)^m$ many sets of the form $B_y$ with $y$ in $U_{1,\varepsilon}$, with $C_1$ a constant depending only on $\mu$. Finally, we show for any $y\in U_{1,\varepsilon}$ that the graph of the map
$$
{\mathbf{b}}_y: B_y \to\RR: x\mapsto a x^\mu
$$
can be covered by no more than $C_2$ a-charts, with $C_2$ a constant depending only on $\mu$.
But this can be seen by composing ${\mathbf{b}}_y$ with the map
$$
(-C_3,C_3)^m\to B_{y} : z \mapsto (y_1 + z_1y_1/2C_3,\ldots, y_m + z_m y_m/2C_3)
$$ for some sufficiently large $C_3\geq 1$ depending only on $\mu$, and by taking the Taylor series around $0$ of the composition. Indeed, the estimates on the Taylor coefficients are easy to obtain.
\end{proof}

Note that $b/a$ is a bounded-monomial function with $a$ and $b$ as in Lemma \ref{lem:b-m-a-chart}, but, when $a\not=1$, then $b$ itself is not bounded-monomial.
 
\subsection{Rectilinear pre-parameterization and the proof of Theorem \ref{thm:fibers-a-chart:g:g}} \label{subs:recti}

In the subanalytic case, we can give a refinement of the pre-parameterization result of \cite{CPW}, by combining with the notion and results about rectilinear cells of Theorem 1.5 of \cite{CM13}. This will be used to prove Theorem \ref{thm:fibers-a-chart:g:g}. We leave the discovery of a variant of Theorem \ref{recti-pre-param} for power-subanalytic sets to the future.

\begin{defin}[Rectilinear cells] \label{defn:recti}
Let $m$ and $\ell$ be positive integers with $0\leq \ell\leq m$.
An open cell $A\subset (0,1)^m$ is called $\ell$-rectilinear if it is of the form $B\times (0,1)^{m-\ell}$, where $B$ is an open cell satisfying $\overline B\subset (0,1)^\ell$, where $\overline B$ is the topological closure of $B$ in $\RR^\ell$.
\end{defin}

The refinement given by the following variant of Theorem \ref{pre-param} lies in the property that the cells $U_{i,t}$ are rectilinear in \ref{pre2r}, and, the appearance of only integer exponents in \ref{pre4r}.

\begin{thm}[Rectilinear pre-parameterization] \label{recti-pre-param}
Let $T\subset (0,1)^k$ and $\cX\subset T\times (0,1)^{n}$ be subanalytic, and suppose that $\cX$ is the graph of a subanalytic function $f: \cU \to (0,1)^{n-m}$ for some $m\geq 0$ and some subanalytic $\cU\subset T\times (0,1)^m$ such that $U_t:=\{x\in (0,1)^m \mid (t,x)\in \cU\}$ is nonempty and open in $(0,1)^{m}$ for each $t\in T$. Then, there exist finitely many subanalytic maps
$$
\varphi_{i}:\cU_i\to \cX
$$
for some subanalytic sets $\cU_i\subset (0,1)^k\times (0,1)^m$ and integers $\ell_i\geq 0$
such that the following hold
\begin{enumerate}[label = \textup{(\arabic*)}, ref = (\arabic*)]
\item \label{pre1r} $\bigcup_i \varphi_i(\cU_i)=\cX$.

\item \label{pre2r} Each $\cU_{i}$ is an open cell in $(0,1)^{k+m}$, and,
for each $t\in (0,1)^k$, the set $U_{i,t}:=\{x\mid (t,x)\in \cU_i\}$ (when nonempty) is an $\ell_i$-rectilinear open cell in $(0,1)^m$.

\item \label{pre3r} Each $\varphi_i$ is a triangular map, in the sense that for each $j\leq k+m$ there is a unique map $\Pi_{<j}(\cU_i)\to \Pi_{<j}(\cX)$ making a commutative diagram with $\varphi_i$ and the projection maps $\cX\to
 \Pi_{<j}(\cX)= \Pi_{<j}(\cU)$ and $\cU_i\to \Pi_{<j}(\cU_i)$, with in both cases $\Pi_{<j}$ the projection to the first $j-1$ coordinates.

\item \label{pre4r} 
For each $i$, the map $\varphi_i$ and the walls $\alpha$ of $\cU_i$ 
are a-b-m with an associated bounded-monomial map $b_i$ such that $b_i$ has bounded $C^1$-norm and such that only integer exponents appear in $b_i$. 
\end{enumerate}
\end{thm}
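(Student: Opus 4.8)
The plan is to deduce Theorem~\ref{recti-pre-param} by combining the preparation-theoretic input from \cite{CM13} with the argument already used to prove Theorem~\ref{pre-param} in \cite{CPW}. The starting observation is that Theorem~1.5 of \cite{CM13} provides, for a subanalytic function, a \emph{rectilinear} cell decomposition of the domain together with a monomialization: after a finite subdivision of $T\times(0,1)^m$ into rectilinear cells and after composing with the rectilinearizing maps, the function $f$ (and the walls of the cells) becomes a-b-m with an associated bounded-monomial map in which \emph{only integer exponents} occur, since in the semi-algebraic/subanalytic case no real powers are introduced. First I would set up the reduction exactly as in the proof of Theorem~\ref{thm:semi-a-chart:g}: by working piecewise on $\cX$ and transforming $T$, reduce to the stated situation where $\cX$ is the graph of a subanalytic $f:\cU\to(0,1)^{n-m}$ with $U_t$ nonempty open. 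Then apply the rectilinear preparation of \cite{CM13} to $f$, viewed as a function of the $m$ ``true'' variables with the $k$ variables from $T$ as parameters, so that the triangularity in \ref{pre3r} is preserved — this is where one must be careful to run the preparation so that the first $k$ coordinates are never mixed into later ones, exactly the triangular format already present in \cite{CPW}.

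Next I would feed the output of the rectilinear preparation into the machinery of \cite{CPW} that produces Theorem~\ref{pre-param}: the construction there is essentially a careful bookkeeping of compositions of bounded-monomial maps and nonvanishing analytic functions, arranged so the final associated bounded-monomial map $b_i$ has bounded $C^1$-norm. The key point is that this bookkeeping is insensitive to whether the exponents in the intermediate monomial maps are integers or reals: if one feeds in data with integer exponents only, the $C^1$-norm-taming steps (inserting extra bounded-monomial factors, passing to subcells, reindexing) all stay within integer exponents, because they only ever multiply, divide, and restrict monomials with integer exponents. Hence the conclusion \ref{pre4r} — $b_i$ a-b-m, bounded $C^1$-norm, integer exponents only — follows by running the \cite{CPW} argument verbatim with the \cite{CM13} input in place of the general power-subanalytic preparation of \cite{M06}. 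The rectilinearity of the $U_{i,t}$ in \ref{pre2r} is then inherited: the cells coming out of \cite{CM13} are rectilinear, and the subcell refinements needed to tame $C^1$-norms can be taken to respect the rectilinear product structure $B\times(0,1)^{m-\ell}$, since the extra constraints only involve the first $\ell$ coordinates (one shrinks $B$, never touches the free block).

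I expect the main obstacle to be verifying compatibility of the two preparation results: \cite{CM13} gives rectilinearization in a possibly different normal form than the a-b-m form used in \cite{CPW}, so one has to check that the rectilinear cell decomposition and monomialization of \cite{CM13} can be post-composed with the \cite{CPW} normalization \emph{without destroying rectilinearity} and \emph{without reintroducing non-integer exponents or breaking triangularity}. Concretely, the delicate step is arranging that the auxiliary bounded-monomial factors that \cite{CPW} introduces to force bounded $C^1$-norm can be chosen to depend only on the ``compact-direction'' coordinates of the rectilinear cell, so that the product structure with $(0,1)^{m-\ell}$ survives; this should be possible because on the free block the relevant monomials are already of the form $x_i^0=1$ up to units, but it requires tracing through the inductive construction. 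Everything else — the reduction to a graph, the piecewise manipulations on $T$ and $\cX$, the preservation of triangularity, and the finiteness statements — is routine o-minimality bookkeeping of the kind already carried out in Section~\ref{sec:pre-par-proof}.
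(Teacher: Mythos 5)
Your proposal matches the paper's strategy essentially step for step: the paper likewise begins by invoking Theorem~1.5 of \cite{CM13} to put $\cX$ in rectilinear a-b-m form with integer exponents, and then re-runs the inductive pre-parameterization argument of \cite{CPW} (induction on $m$, with the inverse-function change of variable in $x_m$ when $\ell=m$, and the observation that boundedness forces nonnegative integer exponents on the free $(0,1)$-block so no change of variable is needed there), checking throughout that rectilinearity, triangularity and integer exponents are preserved. The one small inaccuracy in your sketch is the claim that on the free block the monomials are ``$x_i^0=1$ up to units''---the correct and sufficient observation, as in the paper, is that the exponents there must be nonnegative integers, which already gives bounded $\partial b_j/\partial x_m$.
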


The proof is similar to the proof of the Pre-parameterization result of \cite{CPW} where moreover Theorem 1.5 of \cite{CM13} is used to make the initial situation already rectilinear.

\begin{proof}[Proof of Theorem \ref{recti-pre-param}]
By Theorem 1.5 of \cite{CM13} we may suppose that $\cX$ is the graph of a subanalytic function $f:\cU\subset T\times (0,1)^{m}\to (0,1)^{n-m}$ such that $\cU$ is an open cell in $(0,1)^{k+m}$ and such that moreover $X_t$ is $\ell$-rectilinear for each $t\in T$ and some $\ell\geq 0$ independent from $t$. Moreover, by the same theorem of \cite{CM13}, we may suppose that $f$ and all the walls of $\cU$ are a-b-m with associated bounded-monomial map $b$ with only integer exponents.

We now show by induction on $m$ that from this situation on, up to some parts with a lower value for $m$ (which can be treated by induction on $m$), we can partition $\cX$ into finitely many parts $\cX_i$ each of which can be reparamaterized by maps $\varphi_i$ as in the theorem with moreover $\ell_i=\ell$.
Suppose $m\geq 1$. The map $b$ is $C^1$, since it is bounded-monomial.
By a classical technique (with inverse functions) we will reduce to the case that furthermore $|\partial b_{j}/\partial x_m|$ is at most $1$ for each component function $b_{j}$ of $b$.
First note that if $\cU$ is of the form $\cU'\times (0,1)$ for some $\cU'\subset (0,1)^{k+m-1}$ (that is, we are in the case that $\ell<m$), then $|\partial b_{j}/\partial x_m|$ is already bounded for each component function $b_{j}$ of $b$ since $b$ is bounded-monomial with only integer exponents. (Indeed, the boundedness of $b$ forces the exponents of $x_m$ to be nonnegative integers.)
In the other case that $\ell=m$, we proceed as follows.
Up to partitioning $\cU$ into finitely many definable pieces and neglecting pieces where $U_t$ is of lower dimension by induction on $m$, we may suppose that there is $j$ such that $|\partial b_{j}/\partial x_m|$ is maximal on $\cU$, in the sense that
\begin{equation} \label{eq:bjbj'}
|\partial b_{j}(x)/\partial x_m|\geq |\partial b_{j'}(x)/\partial x_m|
\end{equation}
on $\cU$ for any $j'$. This partitioning based on conditions of the form (\ref{eq:bjbj'}) preserves the $m$-rectilinear form, as well as the fact that the walls are a-b-m, even with the very same bounded-monomial map $b$ with only integer exponents.
Similarly, for this $j$ we may furthermore suppose that either $|\partial b_{j}/\partial x_m|\leq 1$ on $\cU$, or, that $|\partial b_{j}/\partial x_m| > 1$ on $\cU$. In the first case, we have what we want at this point.
In the second case, we note that the function sending $x_m$ to $b_{j}(t,x_{<m},x_m)$ is injective, for each choice of $(t,x_{<m}) = (t_1,\ldots,t_k,x_1,\ldots,x_{m-1})$, since $b$ is bounded-monomial.
Up to replacing $\cX$ by the graph of the function sending $(t,x_{<m},b_j(t,x)/N)$ to $(t,x, f(t,x))$, where $(t,x)$ is in $\cU$ and $N>0$ is sufficiently large, 
we may thus suppose (by the chain rule) that we are in the first case, namely, that $|\partial b_{j}/\partial x_m|\leq 1$ on $\cU$. Note that this change of variables preserves $\ell=m$. We have thus reduced to the case that furthermore $|\partial b_{j}/\partial x_m|$ is at most $1$ for each component function $b_{j}$ of $b$.

\par

We still need to show that we can ensure that the $C^1$-norm of $b$ is bounded.
Since $b$ is a bounded-monomial map, there is $N\geq 1$ such that
$$
|b_{j}/N|<1-\varepsilon,\mbox{ and } |\frac{1}{N}\frac{\partial b_{j}}{\partial x_m}|<1-\varepsilon
$$
for each component function $b_{j}$ of $b$ and some $\varepsilon>0$.
For each wall $\alpha$ of $\cU$ bounding $x_m$, and with $h$ being either $b/N$ or $(1/N)\partial b / \partial x_m $, 
let $h_{\alpha}$ be the map
\begin{equation} \label{halpha}
h_{\alpha} : \Pi_{<m}(\cU )\to (-1,1)^s: (t,x_{<m}) \mapsto \lim_{x_m\to \alpha(t, x_{<m} ) } h(t,x_{<m},x_m),
\end{equation}
where $\Pi_{<m}(\cU)$ is the image of $\cU$ under the coordinate projection $\Pi_{<m}$ sending $(t,x)$ to $(t,x_{<m})$.
This limit always exists by the definition of bounded-monomial maps, and, moreover, $Nh_{\alpha}$ is a bounded-monomial map itself.
Let $\cG$ be the collection of functions on $\Pi_{<m}(\cU)$ consisting of
the component functions of the maps $h_\alpha$ from (\ref{halpha}) and the walls $\alpha$ of $\cU$ bounding $x_m$.
Consider the map $F$ whose component functions are the maps $|g|$ for those $g$ in $\cG$ which are not identically zero.
Apply the induction hypothesis, for $m-1$ instead of $m$ and with $\min (\ell,m-1)$ instead of $\ell$, to the graph of $F$ instead of the graph of $f$,
to find a finite collection of maps $\psi_{\tau }:V_{\tau }\to \textrm{Graph}(F)\setminus \cS$ satisfying properties \ref{pre1r}, \ref{pre2r}, \ref{pre3r}, and \ref{pre4r}, with $\textrm{Graph}(F)\setminus \cS$ in the role of $\cX$, with $\ell_\tau=\min (\ell,m-1)$ for each $\tau$, with associated bounded-monomial maps $c_{\tau}$, and with $S_t$ of dimension smaller than $m-1$ for each $t$.
Using these newly obtained maps $\psi_{\tau }$ we easily get finitely many maps $\varphi_\tau$ with properties \ref{pre1r}, \ref{pre2r}, \ref{pre3r}, and \ref{pre4r} for $\cX\setminus \cS'$ where $\ell_i=\ell$ for each $i$ and for some $\cS'$ where $S'_t:=\{x \mid (t,x)\in \cS'\}$ has dimension less than $m$ for each $t$.
Indeed, let $\cU_{\tau }$ be the cell
$$
\{(t,x)\in (0,1)^{k+m} \mid (t, x_{<m}) \in V_\tau \mbox{ and } (\psi_{\tau }(t, x_{<m})_{<m},x_m) \in \cU\}
$$
and let $\varphi_{\tau }:\cU_{\tau } \to \cX $ be the map 
$$
(t,x)\mapsto (\psi_{\tau }(t, x_{<m} )_{<m} , x_m , f ( \psi_{\tau }(t, x_{<m} )_{<m} , x_m ) ).
$$
By the above application of the induction hypothesis the function
$$
(t,x)\mapsto b( \psi_{\tau }(t, x_{<m} )_{<m} , x_m )
$$
on $\cU_{\tau }$ is a-b-m with an associated bounded-monomial map $d_{\tau}$ with bounded $C^1$-norm and with only integer exponents.
Let $b_{\tau}$ be the map $(c_{\tau},d_{\tau})$.
Then, the maps $\varphi_{\tau }$ satisfy \ref{pre1r}, \ref{pre2r}, \ref{pre3r} and \ref{pre4r} with associated bounded-monomial maps $b_{\tau }$. 
This finishes the proof of Theorem \ref{recti-pre-param}.
\end{proof}

We can now give the proof of Theorem \ref{thm:fibers-a-chart:g:g}.

\begin{proof}[Proof of Theorem \ref{thm:fibers-a-chart:g:g}]
The theorem follows almost directly from the rectilinear pre-parameterization result \ref{recti-pre-param}. Indeed, by Theorem \ref{recti-pre-param} we can reduce to the situation that $\cX\subset (0,1)^{k+m}$ is an open cell such that $X_t$ is $\ell$-rectilinear for each $t\in T$ and such that all walls of $\cX$ are a-b-m with an associated bounded-monomial map $b$ with only integer exponents such that moreover $b$ has bounded $C^1$-norm. This reduction involves working piecewise, and, a subanalytic Lipschitz-continuous transformation of $T$ which is harmless because of the Lipschitz continuity (recall how Lipschitz continuity is used in the proof of Theorem \ref{thm:semi-a-chart:g}). Now, let $S$ be ${\overline {T}}\setminus T$, where ${\overline {T}}$ is the topological closure of $T$ in $\RR^k$. Choose $\delta>0$. If $t$ lies in $T\setminus S_\delta$, then one has by the the rectilinear form and since the walls are a-b-m that
$$
c\delta^M< x_1, \ldots,c\delta^M< x_\ell
$$
for each $x\in X_t$ and some $c>0$ and $M\geq 1$ which are independent of $\delta$. Note also that the exponents of $x_{\ell+i}$ in $b$ must be nonnegative integers for any $i>0$ by the rectilinear form and the fact that $b$ has bounded range. Now, we are done by a variant of Lemma \ref{lem:b-m-a-chart} which takes the rectilinear form and the special nature (as integers, some known to be non-negative) of the exponents into account. 
\end{proof}

\subsection{}\label{sec:weier:sys}
 We end with a further refinement, using a more flexible notion of sets in $\RR^n$ of 
\cite{M06} than the one of power-subanalytic sets, which we now recall. 

Let $\cF$ be a Weierstrass system and let $\cL_\cF$ be the corresponding language as in \cite[Definition 2.1]{M06}. 
By the field of exponents of $\cF$ is meant the set of real $r$ such that $(0,1)\to\RR:x\mapsto (1+x)^r$ is $\cL_\cF$-definable; this set is moreover a field by Remark 2.3.5 of \cite{M06}.
Let $K$ be a subfield of the field of exponents of $\cF$. We denote by $\cL_\cF^K$ the expansion of $\cL_\cF$ by the functions
$$
x \mapsto \begin{cases} x^\rho, \mbox{ if } x> 0,\\ 0 \mbox{ otherwise, }
\end{cases}
$$
for each $\rho\in K$.

Now, we can refine the above theorem \ref{thm:semi-a-chart:g} 
as follows. If the initial data of $T$ and $\cX$ of Theorem \ref{thm:semi-a-chart:g} are moreover $\cL_\cF^K$-definable, then $S$ and the maps $\psi_i$ can be chosen to be $\cL_\cF^K$-definable as well. Theorem \ref{thm:semi-alg-a-chart} thus follows by using $K=\Q$ and the minimal choice of $\cF$, see \cite{M06}. A similar refinement of Theorem \ref{thm:fibers-a-chart:g:g} (giving $\cL_\cF$-definability of $S$ and the $\psi_i$) would also follow from the corresponding adaptation of Theorem 1.5 of \cite{CM13}, which we leave for the future. One may also expect that if the $X_t$ are topologically closed, the a-charts from Theorems \ref{thm:semi-alg-a-chart}, \ref{thm:fibers-semi-alg-a-chart}, \ref{thm:semi-a-chart:g} and \ref{thm:fibers-a-chart:g:g} can be taken with ranges contained in $X_t$.

\end{document}